\providecommand{\U}[1]{\protect\rule{.1in}{.1in}}
\providecommand{\U}[1]{\protect\rule{.1in}{.1in}}
\providecommand{\U}[1]{\protect\rule{.1in}{.1in}}
\providecommand{\U}[1]{\protect\rule{.1in}{.1in}}
\providecommand{\U}[1]{\protect\rule{.1in}{.1in}}
\theoremstyle{theorem}
\newtheorem{Theorem}{Theorem}[section]
\newtheorem{theoremn}{Theorem}
\newtheorem{Proposition}[Theorem]{Proposition}
\newtheorem{Corollary}[Theorem]{Corollary}
\newtheorem{Conjecture}[Theorem]{Conjecture}
\theoremstyle{definition}
\newtheorem{Definition}[Theorem]{Definition}
\newtheorem{Remark}[Theorem]{Remark}
\newtheorem{Example}[Theorem]{Example}
\numberwithin{equation}{section}
\newcommand{\arXiv}[1]{\href{http://arxiv.org/abs/#1}{arXiv:#1}}
\newcommand{\Pic}{\operatorname{Pic}}
\newcommand{\Res}{\operatorname{Res}}
\newcommand{\p}{{\mathbb P}}
\renewcommand{\a}{{\`a}}
\newcommand{\Mor}{\operatorname{Mor}}
\def\leq{\leqslant}
\def\geq{\geqslant}
\def\bibaut#1{{\sc #1}}
\def\phi{\varphi}
\def\ro[#1]{{\textcolor{red}{#1}}}
\renewcommand{\a}{\`a }
\begin{document}

\begin{abstract}
We study some properties of an embedded variety covered by lines and give a numerical criterion ensuring the existence of a singular conic through two of its general points. We show that our criterion is sharp.
Conic-connected, covered by lines, $QEL$, $LQEL$, prime Fano, defective, and dual defective varieties are closely related. We study some relations between the above mentioned classes of objects using celebrated results by Ein and Zak. 
\end{abstract}

\subjclass[2011]{14M99, 14N05, 14J45, 14M07}
\keywords{Conic-connected varieties, covered by lines, dual and secant defective, Hartshorne Conjecture}
\title{Covered by lines and Conic connected varieties}
\author[Simone Marchesi]{Simone Marchesi}
\address{\sc Simone Marchesi\\
Dipartimento di Matematica "Federigo Enriques"\\
Universit\a degli Studi di Milano\\
via Cesare Saldini  50\\
20133 Milano\\
Italy}
\address{Departamento de \'{A}lgebra\\
Facultad de Ciencias Matem\'{a}ticas\\
Universidad Complutense de Madrid\\
Plaza de las Ciencias  3\\
28040 Madrid\\
Spain}
\email{simone.marchesi@unimi.it, smarches@mat.ucm.es}
\author[Alex Massarenti]{Alex Massarenti}
\address{\sc Alex Massarenti\\
SISSA\\
via Bonomea 265\\
34136 Trieste\\ Italy}
\email{alex.massarenti@sissa.it}
\author[Saeed Tafazolian]{Saeed Tafazolian}
\address{\sc Saeed Tafazolian\\
School of Mathematics, Institute for Research in Fundamental Sciences (IPM)\\
P. O. Box: 19395-5746\\
Tehran\\
Iran}
\address{Institute for Advanced Studies in Basic Sciences (IASBS)\\
Department of Mathematics\\
P.O. Box 45195-1159\\
Zanjan\\
Iran}
\email{tafazolian@iasbs.ac.ir}
\address{}
\email{}

\maketitle

\tableofcontents

\section*{Introduction}
The study of rational curves on algebraic varieties is of fundamental importance in algebraic geometry. Indeed the birational geometry of a smooth projective variety is closely related to the rational curves it contains. Many tools have been introduced for this purpose, such as \textit{Mori theory} (see \cite{Mori}) which has been a great breakthrough in the theory of minimal models.

These issues naturally lead to the study of varieties covered by rational curves and rationally connected varieties. Over an algebraically closed field of characteristic zero a variety $X$ is said to be \textit{uniruled} if for $x\in X$ general point there exists a rational curve on $X$ through $x$, while $X$ is \textit{rationally connected} if two general points $x,y\in X$ can be connected by a rational curve on~$X$.

This subject can be explored in an abstract context, for instance by \textit{Hwang} and \textit{Kebekus} in \cite{HK}, or in an embedded setting, by \textit{Ionescu} and \textit{Russo} in \cite{IR2}.
An intermediate point of view is to consider varieties polarized by ample divisors, for instance by \textit{Lanteri} and \textit{Palleschi} in \cite{LP}.

We shall place ourselves in an embedded context, that is considering the variety $X$ as a subvariety of a projective space $\mathbb{P}^{N}$ and using techniques coming from classical algebraic geometry. From this point of view the so called variety of minimal rational tangents of $X$ at a point $x$ (see \cite{Hwang}) is simply the variety $\mathcal{L}_{x}$ parameterizing lines contained in $X$ through $x$. The simplest case of rational connectedness  to study, in the embedded setting, is the existence of a line through two generic points; clearly this is not interesting because a variety with this property is necessarily a linear space. A more interesting case is given by considering the next one, i.e. when two general points can be connected by a conic; varieties with this property are called \textit{conic-connected}, $CC$ for short. Such property is mostly studied in the context of covered by lines, secant defective, $QEL$, and $LQEL$ varieties.

Consider a smooth irreducible $n$-dimensional complex variety $X\subset\mathbb{P}^{N}$ (we will denote by $c$ its codimension), its secant variety $SX$ is the closure of the locus of its secant lines. The \textit{secant defect} of $X$ is the number $\delta(X) = 2n + 1 - \dim(SX)$; the variety is called \textit{secant defective} if $\delta(X)\geq 1$. The locus determined on $X$ by the cone of secant lines through a general point $z\in SX$ is called the \textit{entry locus} of $X$ and denoted by $E_{z}$, note that $E_{z}$ is a purely $\delta$-dimensional subvariety of $X$. The variety $X$ is said to be $QEL$ (\textit{quadratic entry locus}) if $E_{z}$ is a quadric, while $X$ is a $LQEL$ variety (\textit{local quadratic entry locus}) if for $x,y\in X$ general points there is a quadric $Q_{x,y}\subset X$ through $x,y$. These classes of varieties have been widely studied by \textit{Ionescu} and \textit{Russo} in \cite{Ru}, \cite{IR3} and \cite{IR2}.

In section \ref{NP} we will give preliminary notions and some notation and in section \ref{EZ} we will recall two basic theorems due to Zak and Ein, see \cite{Zak} and \cite{Ein}.

In section \ref{PF} we concentrate on non trivial relations between these classes of varieties, dual defective and prime Fano varieties, mainly using \textit{Zak's Theorem on tangencies}, \textit{Ein's classification of dual defective varieties} and properties of $\mathcal{L}_{x}$. Indeed $X$ inherits significant properties from the geometry of $\mathcal{L}_{x}$; see \cite{IR} for a discussion on this issue. In particular we show that if $a\geq n-c$ holds, where $a:=\dim(\mathcal{L}_{x})$, we also have $a\leq \frac{n+c-3}{2}$. Moreover if the last bound is an equality, well known varieties naturally arise such as the Grassmannian $\mathbb{G}(1,4)\subset\mathbb{P}^{9}$ (lines in $\p^{4}$), and the Spinor variety $\mathbb{S}^{10}\subset\mathbb{P}^{15}$. We highlight a relation between varieties covered by lines such that $a\geq n-c$ and the \textit{Hartshorne conjecture on complete intersections} (Conjecture \ref{hconj}). 
In section \ref{PS} we study conics on $X$ and get the following result (Theorem~\ref{CC}).
\begin{theoremn}
Let $X\subset\p^{N}$ be a variety set theoretically defined by homogeneous polynomials $G_{i}$ of degree $d_{i}$, for $i = 1,\ldots,m$. If 
$$\sum_{i=1}^{m}d_{i}\leq \frac{N+m}{2},$$
then $X$ is connected by singular conics.

Assume $X$ to be smooth and the equations $G_{i}$'s to be scheme theoretical equations for $X$ and in decreasing order of degrees. If 
$$\sum_{i=1}^{c}d_{i}\leq \frac{N+c}{2},$$
where $c=N-n$, then $X$ is conic-connected by smooth conics also.
\end{theoremn}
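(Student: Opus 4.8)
The plan is as follows.

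\textbf{First statement.} We may assume $X$ is not a linear subspace (otherwise it is trivially connected by singular conics). For a point $p\in X$ let $\widehat{C}_{p}\subseteq\p^{N}$ be the cone with vertex $p$ swept out by the lines of $X$ through $p$. Writing a line through $p$ as $\{[s\hat p+tv]\}$ and expanding $G_{i}(s\hat p+tv)=\sum_{j=0}^{d_{i}}s^{d_{i}-j}t^{j}\,P^{p}_{ij}(v)$, where $P^{p}_{ij}$ is a form of degree $j$ in $v$, the line lies in $X$ if and only if $P^{p}_{ij}(v)=0$ for $1\le j\le d_{i}$, $1\le i\le m$ (the coefficient $P^{p}_{i0}(v)=G_{i}(\hat p)$ vanishes automatically). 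Hence $\widehat{C}_{p}$ is the common zero locus in $\p^{N}$ of these $\sum_{i}d_{i}$ forms; the key observation is that the top one, $P^{p}_{i,d_{i}}(v)=G_{i}(v)$, does not depend on $p$. Therefore $\widehat{C}_{x}\cap\widehat{C}_{y}$ is cut out in $\p^{N}$ by the $2\sum_{i}d_{i}-m$ forms $\{P^{x}_{ij}\}\cup\{P^{y}_{ij}\}$ (the $m$ forms $G_{i}$ being common to both families), so the hypothesis $\sum d_{i}\le\tfrac{N+m}{2}$, i.e. $2\sum d_{i}-m\le N$, forces $\widehat{C}_{x}\cap\widehat{C}_{y}\neq\emptyset$. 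For general $x,y$ one has $\overline{xy}\not\subseteq X$, whence $x\notin\widehat{C}_{y}$ and $y\notin\widehat{C}_{x}$; thus any $z\in\widehat{C}_{x}\cap\widehat{C}_{y}$ satisfies $z\ne x,y$, the lines $\overline{xz}$ and $\overline{zy}$ are distinct and contained in $X$, and $\overline{xz}\cup\overline{zy}$ is a singular conic through $x$ and $y$ inside $X$.

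\textbf{Second statement: reduction.} Again assume $X$ is not linear. Since $X$ is smooth of codimension $c$, at every point the differentials $dG_{i}$ span the $c$-dimensional conormal space of $X$; hence one may choose (for general $x$ and general $y$ simultaneously) a subset $H_{1},\dots,H_{c}$ of the $G_{i}$ with $dH_{1},\dots,dH_{c}$ linearly independent both at $x$ and at $y$, and by the decreasing-degree hypothesis $\sum_{l}\deg H_{l}\le\sum_{i=1}^{c}d_{i}\le\tfrac{N+c}{2}$. Then $V(H_{1},\dots,H_{c})$ is smooth of codimension $c$ at $x$ and at $y$; as $X\subseteq V(H_{\bullet})$ and $\dim X=N-c$, the variety $X$ is the unique irreducible component of $V(H_{\bullet})$ through $x$ (and through $y$), say $V(H_{\bullet})=X\cup Z$ with $x,y\notin Z$. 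Consequently every irreducible conic contained in $V(H_{\bullet})$ and passing through $x$ (or $y$) must lie in $X$.

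\textbf{Second statement: conics and reduction to a solvability question.} Parametrize conics through $x,y$ by $\phi_{v}(s:t)=[s^{2}\hat x+stv+t^{2}\hat y]$, $v\in\mathbb{C}^{N+1}$: one has $\phi_{v}(1:0)=x$, $\phi_{v}(0:1)=y$, the image is always irreducible, and it is a smooth conic precisely when $\hat x,v,\hat y$ are linearly independent (i.e. $v\notin\langle\hat x,\hat y\rangle$); for $v\in\langle\hat x,\hat y\rangle$ the image is the line $\overline{xy}$. Expanding $H_{l}(s^{2}\hat x+stv+t^{2}\hat y)$, a form of degree $2e_{l}$ in $(s,t)$ with $e_{l}:=\deg H_{l}$, the extreme coefficients equal $H_{l}(\hat x)=H_{l}(\hat y)=0$, so $\phi_{v}(\p^{1})\subseteq V(H_{\bullet})$ is equivalent to the vanishing of the remaining $\sum_{l}(2e_{l}-1)=2\sum_{l}e_{l}-c\le N$ coefficients $q^{l}_{k}(v)$. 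No $v\in\langle\hat x,\hat y\rangle$ can solve this system, since that would force $\overline{xy}\subseteq V(H_{\bullet})$, hence $\overline{xy}\subseteq X$, which is excluded; therefore any solution $v$ gives a \emph{smooth} conic through $x,y$, automatically contained in $X$ by the previous paragraph. Thus the statement reduces to showing that a system of at most $N$ polynomials in $\mathbb{C}^{N+1}$ has a solution. Homogenising with one extra variable $u$, this amounts to finding a point with $u\ne 0$ on the projective scheme $\widetilde{V}\subseteq\p^{N+1}$ cut out by the $N':=2\sum e_{l}-c$ homogenisations, which has every component of dimension $\ge N+1-N'\ge 1$ and meets $\{u=0\}$ exactly along the cone intersection $\widehat{C}_{x}\cap\widehat{C}_{y}$ of the first part.

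\textbf{The main obstacle.} What remains — and this is the heart of the second statement — is to show that $\widetilde{V}$ is not contained in the hyperplane $\{u=0\}$; equivalently, that the singular conic produced above can be promoted to a smooth one. The plan is to smooth the broken line $\overline{xz}\cup\overline{zy}\subseteq X$ inside $X$: the hypothesis gives $\dim\mathcal{L}_{x}\ge N-1-\sum e_{l}\ge 0$, so $X$ is covered by lines and the general line of the covering family is free (characteristic zero); one arranges $z$ so that both components $\overline{xz}$ and $\overline{zy}$ are free lines of $X$, so that the normal bundle of the chain in $X$ has vanishing $H^{1}$, the chain is unobstructed, and it deforms (keeping $x$ and $y$ fixed) to a smooth conic of $X$, which is a $\phi_{v}$ with $v\notin\langle\hat x,\hat y\rangle$ and hence the desired solution. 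The delicate point is precisely that the broken line supplied by the first part can be chosen with free components; an alternative is to prove directly that $\widehat{C}_{x}\cap\widehat{C}_{y}$ has the expected dimension $N-N'$ for general $x,y$, which is strictly smaller than $N+1-N'$ and therefore cannot contain a whole component of $\widetilde{V}$, again yielding a point of $\widetilde{V}$ with $u\ne 0$.
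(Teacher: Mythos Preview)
Your first part is essentially identical to the paper's: both observe that the $\sum d_i$ equations cutting out the cone of lines through $x$ include the $m$ original equations $G_i$, so the two cones together are cut out by $2\sum d_i-m\le N$ forms and must meet.

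For the second part the approaches diverge. The paper's reduction uses a liaison result of Bertram--Ein--Lazarsfeld to produce $c$ equations $g_i\in H^0(\mathcal{I}_X(d_i))$ with $Z(g_1,\dots,g_c)=X\cup X'$ and $X'\cap X$ a divisor; your reduction instead selects $c$ of the given $G_i$ with independent differentials at $x$ and $y$, which is more elementary and also works (the inequality $\sum\deg H_l\le\sum_{i=1}^c d_i$ is indeed guaranteed by the decreasing ordering). After the reduction, however, the paper proceeds in one line: having a singular conic through general $x,y$ on a smooth $X$, it invokes the standard smoothing of a chain of free rational curves (Debarre, Proposition~4.24) to obtain a smooth conic. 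Your direct parametrization $\phi_v(s{:}t)=[s^2\hat x+stv+t^2\hat y]$ and the passage to $\widetilde V\subset\p^{N+1}$ is an unnecessary detour: as you yourself recognize under ``the main obstacle'', the only way you know to rule out $\widetilde V\subseteq\{u=0\}$ is to smooth the broken line, which is exactly the paper's argument. So the elaborate conic parametrization buys nothing, and your proof is effectively the paper's proof with extra scaffolding. One point you should tighten: you say ``one arranges $z$ so that both components are free,'' but in fact no arranging is needed---in characteristic zero the non-free lines do not cover $X$, so for \emph{general} $x$ every line of $X$ through $x$ is free, and likewise for $y$; hence any $z\in\widehat C_x\cap\widehat C_y$ already yields a chain of two free lines, and the smoothing applies directly.
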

This result is closely related to a result obtained by \textit{Bonavero} and \textit{H\"oring} in \cite{BH}, which gives a numerical criterion for conic-connectedness. However while \textit{Bonavero} and \textit{H\"oring} only consider schematic smooth complete intersections we allow $X$ to be singular and give a condition ensuring the existence of a singular conic through two general points. Furthermore, in Remark \ref{SHA}, we show that our inequality is sharp considering a smooth cubic hypersurface in~$\p^{4}$.

\section{Notation and Preliminaries}\label{NP}
We work over the complex field. We mainly follow notation and definitions of \cite{IR}. Throughout this paper we denote by $X\subset \p^{N}$ a smooth irreducible variety of dimension $n\geq 1$. We assume $X$ to be non-degenerate of codimension $c$, so that $N = n+c$. If $x\in X$, we write $T_xX$ for the projective closure of the embedded Zariski tangent space of $X$ at $x$.

\subsubsection*{The Secant Variety}
Let $X\subset \p^N$ be a closed, irreducible subvariety of dimension $n$. Consider the following incidence variety, $\mathcal{S}_X$, called the {\it abstract secant variety} of $X$:
$$\mathcal{S}_X =\overline{\{(x,x',t)|\  x,x'\in X, x\neq x', t\in \langle x,x'\rangle\subset \p^N\}}\subset X\times X\times \p^N,$$
with $\mathcal{S}_X$ irreducible, of dimension $2n+1$.
\begin{Definition} Let $X\subset \p^N$ be an irreducible variety. Its {\it secant variety}, denoted by $SX$, is the image of $\mathcal{S}_X$ in $\p^N$ via the natural projection.
The dimension of $SX$ may be smaller than $2n+1$. In this case we say that $X$ is {\it secant defective} and introduce the {\it secant defect} of $X$ to be $\delta:=2n+1-\dim(SX)\geq 0$.
\end{Definition}
As $SX\subseteq \p^N$, we have that $\dim(SX)\leq N$, which implies $\delta\geq n-c+1$, where $c$ is the codimension of $X$ in $\p^N$.

\subsubsection*{QEL, LQEL, and CC Varieties}
Let $x,y\in X$ be two general points, and let $z \in l_{x,y}$ be a general point on the secant line $l_{x,y} = \langle x,y\rangle$. The trace on $X$ of the closure of the locus of secants to $X$ passing through $z$ is called the \textit{entry locus} of $X$ with respect to $z$, denoted by $E_{z}$. We have that $\dim(E_{z}) = \delta = 2n+1 - \dim(SX)$.
\begin{Definition}
A secant defective variety $X\subset \p^{N}$ of secant defect $\delta$ is called a \textit{quadratic entry locus} $(QEL)$ variety if $E_{z} = Q^{\delta}$ is a $\delta$-dimensional quadric. It is called a \textit{local quadratic entry locus} $(LQEL)$ variety if for $x,y\in X$ general points there exists a $\delta$-dimensional quadric passing through $x,y$ and contained in $X$.
\end{Definition}
Finally we are interested in the first non trivial example of rational connectedness in the embedded case, which occurs when two general points are connected by a conic.
\begin{Definition}
A variety $X\subset \p^{N}$ is called \textit{conic-connected} $(CC)$ variety if for $x,y \in X$ general points there is a conic $C_{x,y}$ passing through $x,y$ and contained in~$X$.
\end{Definition}
If $X$ is $QEL$ and $x,y\in X$ are general points, we can consider a point $z$ on the secant line $l_{x,y}$; the entry locus $E_{z}$ is a $\delta$-dimensional quadric through $x,y$, so $X$ is $LQEL$. Furthermore if $X$ is $LQEL$ and $Q^{\delta}_{x,y}$ is a quadric such that $x,y\in Q^{\delta}_{x,y}\subseteq X$, we can take a plane section $\Pi_{x,y}\cap Q^{\delta}_{x,y} = C_{x,y}$, where $\Pi_{x,y}$ is a plane containing $x,y$. Clearly $C_{x,y}$ is a conic such that $x,y\in C_{x,y}\subseteq X$. To summarize what we said
$$QEL \Longrightarrow LQEL \Longrightarrow CC.$$
For an extensive discussion on $QEL$, $LQEL$, and $CC$ varieties see \cite{Ru}, \cite{IR2}, and \cite{IR3}. 

\subsubsection*{Dual Defective Varieties}
Given a variety $X\subset \p^N$ as above, let us consider the conormal variety
       \[
  \begin{tikzpicture}[xscale=1.5,yscale=-1.2]
    \node (A0_1) at (1, 0) {$\mathcal{C} = \{(x,H) \: | \: T_{x}X\subseteq H\}\subseteq X\times (\p^{N})^{*}$};
    \node (A1_0) at (0, 1) {$X$};
    \node (A1_2) at (2, 1) {$(\mathbb{P}^{N})^{*}$};
    \path (A0_1) edge [->]node [auto] {$\scriptstyle{\pi_{2}}$} (A1_2);
    \path (A0_1) edge [->]node [auto,swap] {$\scriptstyle{\pi_{1}}$} (A1_0);
  \end{tikzpicture}
  \]
Clearly the map $\pi_{1}$ is surjective and its fibers are linear spaces of dimension $c-1$. So $\dim(\mathcal{C}) = n+c-1 = N-1$. Let $H\in (\p^{N})^{*}$ be a hyperplane. The fiber $\pi_{2}^{-1}(H)$ consists of the couples $(x,H)$ such that $H\supseteq T_{x}X$, i.e. $H$ is a contact hyperplane at $x$ to $X$. The image $\pi_{2}(\mathcal{C}) = X^{*}\subseteq (\p^{N})^{*}$ is called the \textit{dual variety} of $X$. Note that 
\begin{itemize}
\item[-] $\dim(X^{*})\leq N-1,$
\item[-] $\dim(X^{*})=N-1 \: \Longleftrightarrow \: \pi_{2}$ is generically finite.
\end{itemize}
\begin{Definition} If $k:= N-1-\dim(X^{*}) > 0$ then $X$ is called \textit{dual defective}, and $k$ is called the \textit{dual defect} of $X$. 
\end{Definition}

\section{Some results by Ein and Zak}\label{EZ}

In this section we recall two theorems by Zak and Ein respectively, which will be fundamental in the proofs of some of our results. For details and complete proofs we refer to \cite{Ein} and \cite{Zak}. The following result, due to Zak, gives a bound on the dimension of the singular locus of a linear section of $X$.

\begin{Theorem}(\underline{Zak's Theorem on tangencies})\label{ZT}
Let $X\subset \p^{N}$ be a non-degenerate variety of dimension $n$. Let $L$ be an $l$-dimensional linear space in $\p^{N}$ with $l\geq n$. Then
\begin{itemize}
\item[-] $\dim(Sing(L\cap X))\leq l-n$.
As a consequence,
\item[-] $\dim(X^{*})\geq \dim(X)$.
\end{itemize}
\end{Theorem}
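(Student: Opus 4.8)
The plan is to establish the dimension bound on $\Sing(L\cap X)$ first, and then to deduce $\dim(X^{*})\geq\dim(X)$ from it by a short argument with the conormal variety $\mathcal{C}$.

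For the bound I would argue by induction on $l$. If $l>n$, pick a general hyperplane $H\subset\p^{N}$ and set $L':=L\cap H$, so that $\dim L'=l-1\geq n$ and $L'\cap X=(L\cap X)\cap H$. The elementary point is that a singular point $y$ of $L\cap X$ lying on $H$ and on a positive-dimensional component of $\Sing(L\cap X)$ is still singular on $L'\cap X$: one has $T_{y}(L'\cap X)=T_{y}(L\cap X)\cap H$, so for general $H$ the chain $\dim T_{y}(L'\cap X)\geq\dim T_{y}(L\cap X)-1>\dim_{y}(L\cap X)-1=\dim_{y}(L'\cap X)$ holds, the strict step because $y$ is singular on $L\cap X$. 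Hence $\dim\Sing(L'\cap X)\geq\dim\Sing(L\cap X)-1$ whenever $\dim\Sing(L\cap X)\geq1$, and the inductive bound $\dim\Sing(L'\cap X)\leq(l-1)-n$ gives $\dim\Sing(L\cap X)\leq l-n$; if $\Sing(L\cap X)$ is finite the bound is immediate from $l\geq n$.

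This reduces the statement to the base case $l=n$, where one must show that $\Sing(L\cap X)$ is finite as soon as $\dim L=\dim X=n$. Suppose $\Sing(L\cap X)$ contained a curve $Y$. For general $y\in Y$ the point $y$ is smooth on $X$ and singular on $L\cap X$, hence $\dim(T_{y}X\cap L)=\dim T_{y}(L\cap X)\geq\dim_{y}(L\cap X)+1$; since $1\leq\dim_{y}(L\cap X)\leq n-1$, in the case $\dim_{y}(L\cap X)=n-1$ — automatic when $n=2$, or when $X$ is a hypersurface, so that $L\cap X$ is pure of dimension $n-1$ — this forces $T_{y}X\cap L=L$, i.e. $T_{y}X=L$ for every $y\in Y$. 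Then the Gauss map $\gamma\colon X\to\G(n,N)$, $x\mapsto T_{x}X$, is constant on the curve $Y$, which is impossible since $\gamma$ is finite for a smooth projective variety in characteristic zero. The residual case $\dim_{y}(L\cap X)<n-1$, which can occur only when $\codim X\geq2$, is the genuine content of Zak's theorem: its proof (see \cite{Zak}) controls the variation of the $n$-planes $T_{y}X$ as $y$ ranges over $Y$ — equivalently, it rests on a connectedness argument — and again contradicts the finiteness of $\gamma$. I expect this to be the main obstacle.

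For the consequence, recall that $\dim\mathcal{C}=N-1$ and that $\pi_{2}\colon\mathcal{C}\to X^{*}$ is dominant, so a general fibre $\pi_{2}^{-1}(H)$ has dimension $N-1-\dim X^{*}$. This fibre is the contact locus $\{x\in X:T_{x}X\subseteq H\}$, and for a hyperplane $H$ and a smooth $X$ it equals $\Sing(X\cap H)$: at a smooth point $x$ of $X$ on $X\cap H$ one has $T_{x}(X\cap H)=T_{x}X\cap H$, of dimension $n-1$ unless $T_{x}X\subseteq H$, in which case it equals $T_{x}X$ and $x$ is singular on the $(n-1)$-dimensional variety $X\cap H$. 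Applying the bound already proved with $l=N-1$ gives $\dim\Sing(X\cap H)\leq N-1-n$, whence $N-1-\dim X^{*}\leq N-1-n$, that is $\dim X^{*}\geq n=\dim X$.
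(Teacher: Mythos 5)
The paper offers no proof of this theorem: it is recalled as a known result in Section~\ref{EZ} and the reader is referred to \cite{Zak} for ``details and complete proofs,'' so there is no internal argument to measure yours against. Judged on its own terms, your proposal is an honest partial reconstruction with a genuine gap that you have correctly located yourself. The reduction to the case $l=n$ by cutting with a general hyperplane is sound (at a point $y$ of a positive-dimensional component of $\Sing(L\cap X)$ one has $T_{y}(L'\cap X)=T_{y}(L\cap X)\cap H$ while the local dimension of $L\cap X$ drops by one, so singularity persists), and the deduction of $\dim(X^{*})\geq\dim(X)$ by identifying the general fibre of $\pi_{2}$ with $\Sing(X\cap H)$ and applying the first bound with $l=N-1$ is the standard, correct argument.

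The gap is the base case, which is the actual content of Zak's theorem. Your tangent-space computation only disposes of the sub-case $\dim_{y}(L\cap X)=n-1$, where $\dim(T_{y}X\cap L)\geq n$ forces $T_{y}X=L$ along the curve $Y$ and finiteness of the Gauss map applies. When $\codim(X)\geq 2$ and $L\cap X$ has small dimension along $Y$, the inequality $\dim(T_{y}X\cap L)\geq\dim_{y}(L\cap X)+1$ puts no constraint on how $T_{y}X$ varies, and no contradiction is reached; Zak's treatment of this case is not a refinement of the pointwise computation but a global argument (joins of $Y$ with a general point of $X$ off $L$, combined with non-degeneracy), which cannot be extracted from what you have written. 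Since you defer exactly this case to \cite{Zak}, the proposal reproduces the easy reductions surrounding the theorem while citing the theorem for its core, and so does not constitute a proof. A further caution: for smooth varieties the finiteness of the Gauss map is itself most often \emph{deduced} from the theorem on tangencies (the case $l=n$), so using it in the base case risks circularity unless you invoke an independent proof, e.g.\ via the linearity of the fibres of the Gauss map and the singularity of varieties with degenerate Gauss map.
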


It is natural to try to classify equality cases in the second inequality from Zak's Theorem above. A partial answer to this question is given by the following theorem by Ein. The answer is partial because condition $n \leq 2c$ is imposed. If the Hartshorne Conjecture, which we will recall later, holds, the condition $n \leq 2c$ would not be restrictive, since complete intersections are not dual defective.

\begin{Theorem} (\underline{Ein})\label{Ein}
Let $X\subset \p^{N}$ be a non-degenerate variety of dimension $n$ and codimension $c$, such that $n\leq 2c$. Suppose that $\dim(X^{*}) = \dim(X)$. Then one of the following holds:
\begin{itemize}
\item[-] $X$ is a hypersurface in $\p^{2}$ or $\p^{3}$;
\item[-] $X$ is projectively equivalent to the Segre variety $\p^{1}\times \p^{n-1}$ in $\p^{2n-1}$;
\item[-] $X$ is projectively equivalent to the Grassmannian $\mathbb{G}(1,4)$ in $\p^{9}$;
\item[-] $X$ is projectively equivalent to the $10$-dimensional Spinor variety $\mathbb{S}^{10}$ in $\p^{15}$.
\end{itemize}
\end{Theorem}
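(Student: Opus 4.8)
The plan is to follow Ein's original argument; see \cite{Ein}. The first move is a reformulation of the hypothesis. By the second assertion of Zak's Theorem on tangencies (Theorem~\ref{ZT}) one has $\dim(X^{*})\geq\dim(X)$, that is, the \emph{dual defect} $k:=N-1-\dim(X^{*})$ satisfies $k\leq c-1$; the assumption $\dim(X^{*})=\dim(X)$ is exactly the extremal case $k=c-1$. If $c=1$ then $X$ is a non-degenerate hypersurface, hence of degree $\geq 2$, for which the Gauss map is finite and $\dim(X^{*})=\dim(X)$ holds automatically; the extra assumption $n\leq 2c=2$ then forces $X\subset\p^{2}$ or $X\subset\p^{3}$, which is the first case of the statement. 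So from now on I would assume $c\geq 2$, in which case the hypothesis forces $X$ to be genuinely dual defective with defect exactly $k=c-1\geq 1$.

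Second, I would describe the general contact locus. For a general $H\in X^{*}$ the fibre $\pi_{2}^{-1}(H)$ of the conormal variety $\mathcal{C}$ has dimension $(N-1)-\dim(X^{*})=k$, and $\pi_{1}$ maps it birationally onto its image $L=L_{H}=\Sing(X\cap H)\subset X$, which has dimension $k$ (in agreement with Zak's bound $\dim\Sing(X\cap H)\leq c-1$). The essential classical input — which I would extract from the infinitesimal behaviour of the Gauss map of $X$ at a general point, i.e.\ from the second fundamental form $\mathrm{II}$ — is that $L$ is a \emph{linear} subspace $\p^{k}$: tangency of $H$ to $X$ all along $L$ forces the components of $\mathrm{II}$ in the $T_{x}L$-directions to degenerate, and a reflexivity argument then gives linearity. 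In particular $X$ is swept out by an $n$-dimensional family of contact $k$-planes, so it is covered by lines.

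Third — and this is the crux — one must understand how these $k$-planes sit inside $X$. Since $L\cong\p^{k}$ is linear one has $N_{L/\p^{N}}\cong\mathcal{O}_{\p^{k}}(1)^{\oplus(N-k)}$, so $N_{L/X}$ is a rank-$(n-k)$ subbundle of a direct sum of copies of $\mathcal{O}_{\p^{k}}(1)$. Ein's key computation — which exploits the extremality $k=c-1$ together with the second fundamental form of $X$ along $L$ and its prolongations — determines $N_{L/X}$, and hence the structure of $X$ transverse to $L$, completely (one can sanity-check on the output of the classification that it is then a homogeneous bundle on $\p^{k}$ with $h^{0}=n$: for instance $N_{L/X}\cong\mathcal{O}_{\p^{k}}(1)\oplus\mathcal{O}_{\p^{k}}$ in the Segre case and $N_{L/X}\cong T_{\p^{2}}(-1)^{\oplus 2}$ for $\G(1,4)$). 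The hypothesis $n\leq 2c=2k+2$ is precisely what keeps the resulting combinatorics finite, leaving a short list of numerical types.

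Fourth and last, I would reconstruct $X$ globally. The universal family of contact $k$-planes maps to $X^{*}$ as a $\p^{k}$-bundle and dominates $X$; the shape of $N_{L/X}$ dictates how the $k$-planes through a general point of $X$ fit together, and rigidity of the normal-bundle/second-fundamental-form data propagates them into one of the standard homogeneous configurations. Running through the admissible numerical types and excluding the rest — invoking a Barth--Lefschetz/Zak-type restriction on the topology of a codimension-$c$ variety with $n\leq 2c$ — one is left with exactly $\p^{1}\times\p^{n-1}\subset\p^{2n-1}$, $\G(1,4)\subset\p^{9}$ and the spinor tenfold $\mathbb{S}^{10}\subset\p^{15}$; conversely a direct check shows each of the four families in the statement does satisfy $\dim(X^{*})=\dim(X)$ and $n\leq 2c$. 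I expect the third and fourth steps to be the real obstacle: distilling the precise local geometry of $X$ along $L$ out of the single equality $k=c-1$ requires the full apparatus of second fundamental forms and their prolongations, and upgrading that local picture to a global identification rests on the rigidity of the Segre, Grassmannian and spinor models, with some delicate low-dimensional geometry needed to recognise $\G(1,4)$ and $\mathbb{S}^{10}$ precisely. By contrast, the reduction via Zak and the linearity of the general contact locus are comparatively routine.
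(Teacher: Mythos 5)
The paper offers no proof of this statement: it is quoted as Ein's theorem and the reader is referred to \cite{Ein} for the argument, so there is no internal proof to compare your proposal against. Your outline --- the Zak reduction to the extremal dual defect $k=c-1$, linearity of the general contact locus via reflexivity, Ein's normal-bundle/second-fundamental-form analysis along a contact $k$-plane, and the global reconstruction under $n\leq 2c$ --- is an accurate summary of the strategy of the cited source, and the two steps you flag as the real work are indeed where the substance of \cite{Ein} lies; deferring them to that reference, exactly as the paper itself does, is the appropriate course here.
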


\section{Prime Fano varieties and varieties covered by lines}\label{PF}
Let $x \in X\subset \p^N$ be a general point. If $\mathcal{L}$ is an irreducible component of the Hilbert scheme of lines of $X$, we denote by $\mathcal{L}_{x}$ the variety of lines from $\mathcal{L}$ passing
through $x$. Note that $\mathcal{L}_{x}$ is embedded in the space of tangent directions at $x$, that is $\mathcal{L}_{x}\subseteq \p(t_xX^*)=\p^{n-1}$, where $t_xX$ denotes the affine embedded Zariski tangent space at $x$.

We denote by $a:= \dim(\mathcal{L}_{x})$. We say that $X$ is \textit{covered by the lines in} $\mathcal{L}$ if $\mathcal{L}_{x}\neq \emptyset$ for $x\in X$ general. It can be  proved that $a = \deg(\mathcal{N}_{l/X})$, where $l$ is a line from $\mathcal{L}$ and $\mathcal{N}_{l/X}$ is its normal bundle. When $a\geq \frac{n-1}{2}$, $\mathcal{L}_x\subset \p^{n-1}$ is smooth and irreducible; if, moreover, $\Pic(X)$ is cyclic, it is also non-degenerate, see \cite{Hwang}.
Recall that $X\subset \p^N$ is a \textit{prime Fano variety of index $i(X)$} if its Picard group is generated by the class $H$ of a hyperplane section and $-K_{X}=i(X)H$ for some positive integer $i(X)$. By the work of Mori, see \cite{Mori}, if we have $i(X) > \frac{n+1}{2}$ then $X$ is covered by a family of lines.

In this section we derive an inequality involving the parameters $n, c, a$, and then we classify the border cases.

Let us remark first that an interesting case is $a \geq n- c$, because in such situation each line from $\mathcal{L}$ is a contact line, which implies that the variety $X$ is not a complete intersection. Indeed, if $a \geq n-c$, then $\dim \left(\langle \bigcup_{x \in l} T_x X\rangle \right) \leq N -1$ for each line $l \subset X$, see \cite[Proposition 2.5]{IR}. 

\begin{Proposition}\label{primaprop}
Let $X\subset \p^N$ be a variety covered by an irreducible family of lines $\mathcal{L}$. If $a\geq n-c$, then $a\leq \frac{n+c-3}{2}$.
\end{Proposition}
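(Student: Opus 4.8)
The plan is to exploit the hypothesis $a \geq n-c$ through the contact-line property quoted just before the statement: when $a \geq n-c$, every line $l \subset X$ from $\mathcal{L}$ is a contact line, meaning $\dim\langle \bigcup_{x\in l} T_xX\rangle \leq N-1$. Combined with the assumption that $a \geq \frac{n-1}{2}$ is automatic here — indeed $a \geq n-c$ together with $c \leq n$ (non-degeneracy forces nothing directly, but the interesting range is $c\leq n$, and one checks $n-c \geq \frac{n-1}{2}$ fails in general, so this needs care) — we get that $\mathcal{L}_x \subset \p^{n-1}$ is smooth and irreducible. The key external input I would use is Zak's Theorem on tangencies (Theorem \ref{ZT}): since $X$ is covered by lines and $a \geq n-c$, the generic line is a contact line, hence there is a hyperplane $H$ tangent to $X$ along $l$; applying Zak's bound $\dim(\Sing(H\cap X)) \leq (N-1)-n = c-1$ to this tangent hyperplane shows the contact locus of $H$ has dimension at most $c-1$.

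First I would set up the \emph{contact locus}: for a general line $l \in \mathcal{L}$, since $l$ is a contact line, choose $H \supseteq \langle \bigcup_{x\in l}T_xX\rangle$, and let $Y = \{x \in X : T_xX \subseteq H\}$ be the contact locus, which contains $l$ and along which $H\cap X$ is singular. By Zak, $\dim Y \leq c-1$. Second, I would estimate $\dim Y$ from below using the family of lines: every line of $\mathcal{L}$ through a point of $l$ that is again a contact line for the \emph{same} $H$ lies in $Y$; more usefully, $Y$ contains all the lines from $\mathcal{L}$ meeting $l$ and tangent to $H$. The standard move (this is the Hwang–style argument alluded to via \cite{Hwang}, \cite{IR}) is that the variety of lines through a fixed general point $x \in l$ has dimension $a$, and the incidence between $Y$ and lines through its points forces $\dim Y \geq$ something like $2a - (n-1) + 1$ via the expected dimension count in $\mathcal{L}_x \subseteq \p^{n-1}$: two general members of $\mathcal{L}_x$ span a line inside $Y$ only when $2a \geq n-1$, and then the locus swept is roughly $2a-(n-1)+1$-dimensional — but the cleanest route is to directly bound $\dim Y$ below by $2a - n + 2$ using that $\mathcal{L}_x$ is non-degenerate (needs $\Pic(X)$ cyclic) or at least that $\mathcal{L}_x$ and a translate inside $\p(T_xX)$ must intersect.

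Putting the two bounds together, $2a - n + 2 \leq \dim Y \leq c - 1$ gives $2a \leq n + c - 3$, i.e. $a \leq \frac{n+c-3}{2}$, which is exactly the claim. The \textbf{main obstacle} I anticipate is the lower bound $\dim Y \geq 2a-n+2$: it requires controlling how the lines of $\mathcal{L}$ through points of $l$ fit together inside the contact locus $Y$, and in particular one must ensure enough of them are tangent to the chosen $H$ — this is where the hypothesis $a \geq n-c$ is used a second time (it guarantees the contact hyperplane exists for a whole subfamily, not just for $l$), and where smoothness and irreducibility of $\mathcal{L}_x$ (valid since $a \geq \frac{n-1}{2}$, which itself must be deduced from $a \geq n-c$ in the relevant range, or handled separately when $n-c < \frac{n-1}{2}$) are invoked to run a dimension count à la \cite[Proposition 2.5]{IR}. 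The rest — invoking Zak and arithmetic — is routine.
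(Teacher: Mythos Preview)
Your high-level outline is right: Zak's Theorem on tangencies caps the dimension of a contact locus by $c-1$, and the hypothesis $a\geq n-c$ makes every line a contact line. But the lower bound you propose, $\dim Y \geq 2a-n+2$ for the contact locus $Y$ of a \emph{single} chosen hyperplane $H$, is not justified by the argument you sketch. Once you have fixed $H$, there is no reason the lines of $\mathcal{L}$ through points of $l$ should be tangent to that same $H$; the ``intersection of two copies of $\mathcal{L}_x$ in $\p^{n-1}$'' heuristic does not produce lines lying in $Y$, and the numerology you offer ($2a-(n-1)+1$) has no clear geometric meaning here. You correctly flag this as the main obstacle, and indeed it is a genuine gap.

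The paper closes this gap by \emph{not} fixing $H$. Instead it sets up the incidence variety
\[
\mathcal{I}=\{(l,H)\in \mathcal{L}_x\times T_xX^{*} : H\supseteq \langle\textstyle\bigcup_{y\in l}T_yX\rangle\}
\]
and bounds $\dim\mathcal{I}$ from both sides. For the lower bound one uses the precise content of \cite[Proposition~2.5]{IR}, namely $\dim\langle\bigcup_{y\in l}T_yX\rangle\leq 2n-a-1$: hyperplanes in $\p^N$ containing this span form a linear space of dimension $\geq N-(2n-a-1)-1=a-n+c$, so every fiber of $\pi_1:\mathcal{I}\to\mathcal{L}_x$ has dimension $\geq a-n+c$, and since $a\geq n-c$ the projection is surjective, giving $\dim\mathcal{I}\geq 2a-n+c$. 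For the upper bound, Zak bounds each fiber of $\pi_2:\mathcal{I}\to T_xX^{*}$ by $c-2$ (the contact locus has dimension $\leq c-1$, and it contains the cone of lines over $\pi_2^{-1}(H)$), while $\dim T_xX^{*}=c-1$, so $\dim\mathcal{I}\leq 2c-3$. Combining yields $2a-n+c\leq 2c-3$, i.e.\ $a\leq\frac{n+c-3}{2}$.

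Note also that the paper's argument needs none of the auxiliary hypotheses you worry about: smoothness or irreducibility of $\mathcal{L}_x$, the inequality $a\geq\frac{n-1}{2}$, and cyclicity of $\Pic(X)$ play no role. The only ingredients are Zak's theorem and the tangent-span bound $2n-a-1$ from \cite{IR}.
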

\begin{proof}
Let $x\in X$ be a general point, and let $l\subseteq X$ be a line through $x$. We may consider the incidence variety
  \[
  \begin{tikzpicture}
    \def\x{1.5}
    \def\y{-1.2}
    \node (A0_1) at (1*\x, 0*\y) {$\mathcal{I} = \{(l,H)\: | \: H\supseteq \bigcup_{y\in l}T_{y}X\}\subseteq \mathcal{L}_{x}\times T_{x}X^{*}$};
    \node (A1_0) at (0*\x, 1*\y) {$\mathcal{L}_{x}$};
    \node (A1_2) at (2*\x, 1*\y) {$T_{x}X^{*}$};
    \path (A0_1) edge [->] node [auto] {$\scriptstyle{\pi_{2}}$} (A1_2);
    \path (A0_1) edge [->] node [auto,swap] {$\scriptstyle{\pi_{1}}$} (A1_0);
  \end{tikzpicture}
  \]
Let $H$ be a hyperplane in $\p^{N}$. By Zak's Theorem on tangencies, its contact locus is of dimension at most $c-1$. If $H\in Im(\pi_{2})$, the contact locus of $H$ contains the locus of lines from $\pi_{2}^{-1}(H)$. We get $\dim(\pi_{2}^{-1}(H))\leq c-2$. Furthermore, $\dim(Im(\pi_{2}))\leq \dim(T_{x}X^{*}) = c-1$, so
$$\dim(\mathcal{I}) \leq 2c-3.$$  
By \cite[Proposition 2.5]{IR} we know that $\dim \left(\langle \bigcup_{y \in l} T_y X\rangle \right)\leq 2n-a-1$ for $l\in \mathcal{L}_{x}$. It follows that 
$\dim((\pi_{1}^{-1})(l))\geq a-n+c$ for $l\in \mathcal{L}_{x}$. Since $a\geq n-c$, any line is contact and $\pi_{1}$ is surjective. Then $\dim(\mathcal{I})\geq 2a-n+c$, and combining the inequalities
$$2a-n+c \leq \dim(\mathcal{I}) \leq 2c-3,$$
we get $a\leq \frac{n+c-3}{2}$.
\end{proof}

The following remark gives some information about the case when the upper bound in the above proposition does not hold.

\begin{Remark}\label{primormk}
If $X\subset \p^N$ is covered by lines and $a \geq \frac{n+c-2}{2}$, then $X$ is $CC$ and $n\geq 3c$; indeed for two general points, the two cones made by the locus of lines of $X$ passing through those points intersect (being of dimension at least half the dimension of the ambient projective space). By Proposition \ref{primaprop}, we know that $a \leq n-c-1$, giving $n \geq 3c$.
\end{Remark}

As usual when one gets an inequality it is nice to classify cases for which equality holds.

\begin{Proposition}\label{secondaprop}
Let $X\subset \p^N$ be a variety covered by lines. Suppose $a\geq n-c$ and $a = \frac{n+c-3}{2}$. Then $X$ is dual defective and $\dim(X)=\dim(X^*)$. If in addition we assume $n\leq 2c$, then 
\begin{itemize}
\item[-] $X$ is projectively equivalent to the Segre embedding $\p^{1}\times \p^{n-1}$ in $\p^{2n-1}$, $n\geq 3$, or
\item[-] $X$ is projectively equivalent to the Grassmannian $\mathbb{G}(1,4)$ in $\p^{9}$, or
\item[-] $X$ is projectively equivalent to the Spinor variety $\mathbb{S}^{10}$ in $\p^{15}$.
\end{itemize}
\end{Proposition}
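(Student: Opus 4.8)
The plan is to feed the equality $a=\frac{n+c-3}{2}$ back into the chain of inequalities used in the proof of Proposition \ref{primaprop} and extract geometric consequences, then invoke Ein's theorem. First, recall that when $a=\frac{n+c-3}{2}$ we have in particular $a\geq\frac{n-1}{2}$ precisely when $c\geq 2$ (and the hypersurface cases $c=1$ will be handled separately, noting that a hypersurface with $a\geq n-1$ must be a quadric or a linear space, cf. the classification); so assume $c\geq 2$, whence $\mathcal{L}_x\subset\p^{n-1}$ is smooth and irreducible. Equality $2a-n+c=2c-3$ forces equality throughout the estimate for $\dim(\mathcal{I})$: the map $\pi_2:\mathcal{I}\to T_xX^*$ is dominant (so $\dim(\mathrm{Im}\,\pi_2)=c-1$), its general fibre has dimension exactly $c-2$, and $\dim\langle\bigcup_{y\in l}T_yX\rangle=2n-a-1=N-1$ for general $l\in\mathcal{L}_x$, i.e.\ the general line of $\mathcal{L}$ is a contact line whose span is an honest hyperplane. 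Since $\dim(\mathrm{Im}\,\pi_2)=c-1$ equals $\dim T_xX^*$, the contact hyperplanes coming from lines through $x$ fill up a full-dimensional subset of $T_xX^*$; letting $x$ vary, one sees that through a general point of $X$ there is a contact hyperplane, so $\pi_2$ in the global conormal diagram of Section \ref{NP} has positive-dimensional fibres, giving $\dim(X^*)\leq N-2$, i.e.\ $X$ is dual defective. Combined with Zak's Theorem on tangencies (Theorem \ref{ZT}), which gives $\dim(X^*)\geq\dim(X)=n$, we must in fact have $\dim(X^*)=n$; here I would need to check that the dual defect is exactly $N-1-n$ and not larger, which follows because each contact hyperplane's contact locus has dimension $\geq 1$ hence the generic contact locus has dimension exactly $k=N-1-\dim(X^*)$, and pinning $\dim(X^*)=n$ down requires showing the fibres of the global $\pi_2$ are no bigger than forced — the cleanest route is to observe $\dim(X^*)+(\text{generic contact locus dim}) = N-1$ and that the contact locus through a general $x$ has dimension $\leq c-1$ by Zak, while our analysis shows it is exactly $c-1$ (it contains an $(a-n+c)$-dimensional, hence by equality a $(c-2)$-dimensional, family of lines through $x$, sweeping a $(c-1)$-dimensional locus), so $\dim(X^*)=N-1-(c-1)=n$.

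With $\dim(X^*)=\dim(X)$ established, I would simply invoke Ein's Theorem \ref{Ein}: under the additional hypothesis $n\leq 2c$, $X$ is projectively equivalent to a hypersurface in $\p^2$ or $\p^3$, to the Segre $\p^1\times\p^{n-1}\subset\p^{2n-1}$, to $\G(1,4)\subset\p^9$, or to the Spinor tenfold $\mathbb{S}^{10}\subset\p^{15}$. It then remains to rule out, or rather to note the constraints on, the hypersurface case: if $X\subset\p^{n+1}$ is a dual defective hypersurface with $\dim X^*=\dim X$ it is a quadric, but a smooth quadric $Q^n$ has $a=n-2$, and the equality $a=\frac{n+c-3}{2}=\frac{n-2}{2}$ forces $n=2$, a conic in $\p^2$, which is degenerate-adjacent and in any case excluded by non-degeneracy/dimension conventions (or absorbed into the Segre family for small $n$); similarly the constraint $a\geq n-c$ together with $a=\frac{n+c-3}{2}$ gives $n\leq 3c-3$, compatible with all three remaining cases and their known invariants ($\p^1\times\p^{n-1}$: $c=n-2$, $a=n-2$; $\G(1,4)$: $n=6,c=3,a=3$; $\mathbb{S}^{10}$: $n=10,c=5,a=6$), which one checks satisfy $a=\frac{n+c-3}{2}$ and $a\geq n-c$, and the bound $n\geq 3$ for the Segre comes from non-degeneracy. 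I would present this verification as a short table rather than prose.

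The main obstacle I anticipate is the passage from the \emph{local} statement at a general point $x$ — that the contact hyperplanes spanned by lines through $x$ fill $T_xX^*$ and have the expected fibre dimensions — to the \emph{global} dual-defectivity statement with the precise value $\dim(X^*)=n$. The subtlety is that a priori different lines through $x$, or lines through different points, could give the same contact hyperplane, so one needs a dimension count on the global conormal variety $\mathcal{C}$ showing that $\pi_2$ contracts an $(N-1-n)$-dimensional family; the equality case of Proposition \ref{primaprop} is exactly what supplies the numerics, but one must be careful that $\mathcal{I}$ computes the right thing (it parametrizes pairs (line through $x$, contact hyperplane containing the span)), and relate $\dim\mathcal{I}$ to the fibre dimension of the global $\pi_2$ over a general hyperplane in $X^*$. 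Zak's theorem gives the reverse inequality $\dim X^*\geq n$ for free, so the real content is the upper bound, and I expect the bookkeeping matching "family of contact lines through $x$" against "fibre of global $\pi_2$" to be the step requiring the most care.
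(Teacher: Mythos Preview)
Your approach is essentially the paper's: feed the equality $a=\tfrac{n+c-3}{2}$ back into the chain $2a-n+c\leq\dim\mathcal{I}\leq 2c-3$, deduce that $\pi_2:\mathcal{I}\to T_xX^*$ is surjective with general fibre of dimension $c-2$, conclude the dual defect satisfies $k\geq c-1$ so $\dim X^*\leq n$, then combine with Zak's bound $\dim X^*\geq n$ and invoke Ein. Two small corrections and one reassurance:

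\begin{itemize}
\item[(i)] The hypersurface case is excluded in one line: a smooth non-degenerate hypersurface is \emph{never} dual defective (its Gauss map is finite, so $X^*$ is a hypersurface). There is no need to argue that a dual-defective hypersurface ``is a quadric'' and then compute $a$ for the quadric; that intermediate claim is in fact vacuous, and the paper simply writes ``Since $X$ is dual defective, it cannot be a hypersurface.''
\item[(ii)] In your verification table the Segre $\p^1\times\p^{n-1}\subset\p^{2n-1}$ has codimension $c=n-1$, not $n-2$; with this correction $\tfrac{n+c-3}{2}=n-2=a$ as required, and $a\geq n-c$ becomes $n\geq 3$.
\item[(iii)] Your ``main obstacle'' is not one. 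Once you know that for general $x$ the map $\pi_2$ onto $T_xX^*$ is surjective with $(c-2)$-dimensional general fibre, you know that the \emph{general} hyperplane tangent at $x$ is tangent along a locus of dimension at least $(c-2)+1=c-1$. But a general point of $X^*$ corresponds to a hyperplane tangent at some general point of $X$, so the general fibre of the global $\pi_2:\mathcal{C}\to X^*$ has dimension at least $c-1$; this gives $k\geq c-1$ directly, with no further bookkeeping needed. This is exactly how the paper argues.
\end{itemize}
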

\begin{proof}
We refer to the previous proof and consider the inequalities $2a-n+c \leq \dim(\mathcal{I}) \leq 2c-3$. From $a = \frac{n+c-3}{2}$ we get $2a-n+c = 2c-3 = \dim(\mathcal{I})$. Furthermore $\dim(Im(\pi_{2})) \geq 2c-3 - (c-2) = c-1$, and since $\dim(Im(\pi_{2}))\leq \dim(T_{x}X^{*}) = c-1$ we get $\dim(Im(\pi_{2})) = c-1$, i.e. $\pi_{2}$ is surjective. This last fact means that the general hyperplane tangent at $x$ to $X$ is tangent along a positive dimensional subvariety; but $x\in X$ is a general point, so a general tangent hyperplane is tangent along a positive dimensional subvariety, in other words $X$ is dual defective. Moreover $\dim(\pi_{2}^{-1}(H)) = 2c-3-(c-1) = c-2$ for $H\in T_xX^*$ general, so the dual defect of $X$ is $k\geq (c-2)+1 = c-1$. Hence the dimension of the dual variety is $\dim(X^{*}) = N-1-k \leq n$.\\ 
On the other hand by Theorem \ref{ZT} we have $\dim(X^{*}) \geq \dim(X) = n$, and then $\dim(X^{*}) = \dim(X)$. Since $n\leq 2c$ the hypothesis of Theorem \ref{Ein} are satisfied. Note that $n = \dim(X^{*}) = N-1-k$, so $k = c-1$. Since $X$ is dual defective, it can not be a hypersurface. So we are left with the three cases listed in our proposition. It is easy to see that, conversely, all the varieties listed satisfy our hypotheses. To conclude, we remark that the conditions $a = \frac{n+c-3}{2}$ and $n\leq 2c$ imply that $a\geq n-c$, unless $c\leq 2$. The cases $c=1$ and $c=2$ lead to only one new case, the two-dimensional quadric in $\p^3$.
\end{proof}

We have already noticed that since $a\geq n-c$ the variety $X$ is not a complete intersection. Let us recall the \textit{Hartshorne Conjecture on complete intersections}. For details on this Conjecture we refer to \cite{Ha}. 
  
\begin{Conjecture}[Hartshorne]\label{hconj}
Let $X\subseteq\p^{N}$ be a non-degenerate smooth variety such that $n \geq 2c+1$. Then $X$ is a complete intersection.
\end{Conjecture}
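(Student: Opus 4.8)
I should say at the outset that this is R. Hartshorne's 1974 complete intersection conjecture, and no proof is known: it is open already in codimension $c = 2$, where it asserts that every smooth non-degenerate $X \subseteq \p^{N}$ of dimension $n \geq 5$ (so $N \geq 7$) is a complete intersection. What follows is therefore not a proof but a description of the ideas one would assemble and of why the obstruction is genuine; within this paper the conjecture functions as an organizing principle rather than a result, and one of its roles is precisely to make the hypothesis $n \leq 2c$ of Proposition \ref{secondaprop} natural --- indeed, granting the conjecture, the assumption $a \geq n-c$ appearing in Propositions \ref{primaprop} and \ref{secondaprop} forces $n \leq 2c$, because a variety with $a \geq n-c$ is not a complete intersection.

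The plan would start from the cohomological evidence. For $n \geq 2c+1$ the Barth--Larsen theorem, together with the Lefschetz hyperplane theorem, makes $H^{i}(\p^{N},\Z) \to H^{i}(X,\Z)$ an isomorphism in a range containing $i = 2$; hence $\Pic(X) \cong \Z$ is generated by the hyperplane class, $X$ is simply connected, and --- $K_{X}$ being then a multiple $rH$ of the hyperplane class --- $X$ is subcanonical. Thus $X$ already carries all the topology and all the numerical invariants of a complete intersection of the appropriate multidegree; the entire content of the conjecture is the purely algebraic assertion that the homogeneous ideal of $X$ is generated by exactly $c$ forms (in codimension two, via the Serre correspondence, the splitting of the associated rank-two bundle). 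The reduction is clean, but producing those $c$ equations is exactly the step no present technique reaches without extra hypotheses.

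The natural attack, and the one for which the machinery of this paper is relevant, is to control the degree. A complete intersection of multidegree $(d_{1},\dots,d_{c})$ has $\deg X = \prod d_{i}$ and index $i(X) = N+1-\sum d_{i}$ when this is positive, so large index is the same as small degree. When the index $r$ is large compared to $n$, Mori's theorem makes $X$ covered by lines and forces $a = \dim(\mathcal{L}_{x})$ to be large; Proposition \ref{primaprop} and Remark \ref{primormk} then bound $a$ in terms of $n$ and $c$, and in the extremal case Proposition \ref{secondaprop} together with the theorems of Zak and Ein pin $X$ down to $\p^{1}\times\p^{n-1}$, $\G(1,4)$, or $\mathbb{S}^{10}$, none of which lies in the range $n \geq 2c+1$; so in this Fano range one can realistically hope to conclude by classification. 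The hard part --- and the reason the conjecture has stood for half a century --- is the complementary range: when $X$ is of intermediate or general type there is no uniruling to exploit, no a priori bound on $\deg X$, and no known mechanism for writing down the defining equations of a smooth variety of very small codimension. I would therefore not claim the conjecture as a theorem, but record, as the paper can legitimately do, that by Proposition \ref{secondaprop} and Ein's theorem no counterexample occurs among the varieties covered by lines with $a \geq n-c$ and $a = \frac{n+c-3}{2}$.
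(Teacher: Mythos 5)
The statement is a \emph{Conjecture} (Hartshorne, 1974), and the paper offers no proof of it --- it is stated with a reference to \cite{Ha} and used only as an organizing hypothesis, exactly as you describe. Your refusal to claim a proof is the correct response, and your account of the surrounding context (Barth--Larsen reducing the problem to producing the $c$ equations, and the role of the conjecture in making $n\leq 2c$ a natural hypothesis in Proposition \ref{secondaprop}) is accurate and consistent with how the paper treats it.
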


Since complete intersections are not dual defective, the truth of the Hartshorne Conjecture would make the condition $n \leq 2c$ in the previous proposition superfluous. This argument also leads to the following conjecture:

\begin{Conjecture}
Let $X\subset \p^N$ be a non-degenerate variety covered by lines. If $a\geq n-c$, then $n\leq 2c$.
\end{Conjecture}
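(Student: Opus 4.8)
The plan is to derive this conjectural statement from the two Propositions just proved together with the Hartshorne Conjecture, exactly in the spirit of the paragraph preceding it. Assume $X\subset\p^N$ is a non-degenerate variety covered by lines with $a\geq n-c$, and suppose for contradiction that $n\geq 2c+1$. First I would invoke Proposition \ref{primaprop}: since $a\geq n-c$ we already have the bound $a\leq\frac{n+c-3}{2}$, so the relevant regime is where this dimension of $\mathcal{L}_x$ is controlled. The key structural input is the remark, recalled in the excerpt and justified via \cite[Proposition 2.5]{IR}, that $a\geq n-c$ forces each line of $\mathcal{L}$ to be a contact line, whence $\dim\langle\bigcup_{x\in l}T_xX\rangle\leq N-1$ for every $l\subset X$; this is precisely the property that obstructs $X$ from being a complete intersection, since a complete intersection of dimension $n\geq 1$ has every tangent-variety-along-a-line spanning all of $\p^N$ (equivalently, complete intersections are not dual defective, and the contact-line condition propagates to dual defectivity as in the proof of Proposition \ref{secondaprop}).

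The core step is then: under $n\geq 2c+1$ the Hartshorne Conjecture \ref{hconj} applies and yields that $X$ is a complete intersection. I would next show this contradicts $a\geq n-c$. One clean route is to compare with Proposition \ref{secondaprop}'s mechanism: the hypothesis $a\geq n-c$ makes $\pi_1$ surjective in the incidence variety $\mathcal{I}$, and running the dimension count $2a-n+c\leq\dim(\mathcal{I})\leq 2c-3$ shows $X$ is dual defective (or at least that a general tangent hyperplane is tangent along a positive-dimensional locus). But a smooth complete intersection $X\subset\p^N$ of positive dimension is never dual defective — this is classical (e.g. via the fact that the dual of a smooth complete intersection has the expected dimension $N-1$, or directly from the Lefschetz-type properties of hyperplane sections). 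This contradiction forces $n\leq 2c$, as claimed. Alternatively, and perhaps more transparently, I would argue directly from the contact-line statement: $a\geq n-c$ implies $\dim\langle\bigcup_{y\in l}T_yX\rangle\leq 2n-a-1\leq n+c-1<N$, so no line's tangential span fills $\p^N$; for a smooth complete intersection one checks (using that $T_yX$ is cut out by the differentials of the $c$ defining equations at $y$, which move in a $c$-dimensional family as $y$ varies along $l$) that this span is in fact all of $\p^N$, again a contradiction.

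The main obstacle, of course, is that this argument is only as strong as the Hartshorne Conjecture itself — the statement is labeled a Conjecture precisely because it is not known unconditionally. So the honest content of the proof is the implication ``Hartshorne $\Rightarrow$ (covered by lines with $a\geq n-c$ forces $n\leq 2c$)''. Within that conditional framework the delicate point is making rigorous the claim that a smooth complete intersection of positive dimension cannot satisfy the contact-line/dual-defectivity condition; I would cite the standard fact that smooth complete intersections are not dual defective (see the discussion around \cite{Ein}, where the condition $n\leq 2c$ in Theorem \ref{Ein} is justified by exactly this observation) rather than reprove it. A secondary subtlety is the boundary behavior noted at the end of the proof of Proposition \ref{secondaprop}, namely that for very small codimension ($c\leq 2$) one must check the few low-dimensional cases by hand — quadric hypersurfaces and the like — but these are easily seen either to fail $a\geq n-c$ or to satisfy $n\leq 2c$ outright, so they cause no trouble.
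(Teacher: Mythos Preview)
The statement is a \emph{Conjecture} in the paper and carries no proof there; the paper only motivates it by the sentence immediately preceding it: since $a\geq n-c$ forces every line to be a contact line and hence $X$ is not a complete intersection (stated at the start of Section~\ref{PF}, citing \cite[Proposition 2.5]{IR}), the Hartshorne Conjecture would give $n\leq 2c$. Your proposal correctly recognizes this conditional status and reproduces exactly this heuristic, so in that sense you and the paper agree perfectly.

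One remark on your two suggested routes. The second (``alternative'') route --- $a\geq n-c$ gives $\dim\langle\bigcup_{y\in l}T_yX\rangle\leq N-1$, hence $X$ is not a complete intersection, contradicting Hartshorne --- is precisely the paper's stated reasoning and is clean. Your first route, via dual defectivity, does not quite work as written: the surjectivity of $\pi_2$ in the proof of Proposition~\ref{secondaprop} (and hence the conclusion that $X$ is dual defective) is extracted only from the \emph{equality} $a=\frac{n+c-3}{2}$, which forces $\dim(\mathcal{I})=2c-3$; under the bare hypothesis $a\geq n-c$ you only know $2a-n+c\leq\dim(\mathcal{I})\leq 2c-3$, and there is no reason $\pi_2$ should be onto. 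So that route would need a separate argument that $a\geq n-c$ alone already produces dual defectivity, which the paper neither claims nor proves. Stick with the contact-line/not-complete-intersection argument; it is both simpler and exactly what the paper has in mind.
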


As an application of Proposition \ref{primaprop}, we have the following result, showing that prime Fano varieties of high index are quite special.

\begin{Proposition}\label{terzaprop}
Let $X\subset \p^N$ be a prime Fano variety covered by lines and let $i=i(X)$. If $i\geq \frac{n+\delta}{2}$ then one of the following holds:
\begin{enumerate}
\item[-] $X$ is a quadric, or\label{case1}
\item[-] $c\geq 3$ and $n\leq 2c$\label{case2}. Moreover:
\begin{itemize}
\item[-] if $X$ is a $CC$ variety, then $X$ is an $LQEL$ variety;
\item[-] if $n=2c$, then $X \simeq \mathbb{G}(1,4) \subset \mathbb{P}^9$ or $X \simeq \mathbb{S}^{10} \subset \mathbb{P}^{15}$.
\end{itemize}
\end{enumerate}
\end{Proposition}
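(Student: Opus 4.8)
The plan is to convert the index hypothesis into information about $a=\dim(\mathcal{L}_x)$ and then invoke Propositions \ref{primaprop} and \ref{secondaprop}. The first ingredient is that for a prime Fano variety covered by lines one has $a=i-2$: if $l\in\mathcal{L}$, then $a=\deg(\mathcal{N}_{l/X})$, and adjunction on $l\cong\p^1$ gives $\det(\mathcal{N}_{l/X})=\mathcal{O}_l(-K_X\cdot l-2)=\mathcal{O}_l(i-2)$. Feeding in the universal bound $\delta\geq n-c+1$ and the hypothesis $i\geq\frac{n+\delta}{2}$, I get
$$a=i-2\;\geq\;\frac{n+\delta}{2}-2\;\geq\;\frac{2n-c+1}{2}-2\;=\;n-\frac{c+3}{2}.$$

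First I would clear away the cases $c\leq 2$, which should land in alternative (1). If $c=1$, then $X$ is a hypersurface of degree $d\geq 2$ (it is non-degenerate), so $SX=\p^{n+1}$, $\delta=n$, and the hypothesis becomes $i=n+2-d\geq n$, forcing $d=2$: $X$ is a quadric. If $c=2$, then $\delta\geq n-1$ and integrality of $i$ give $i\geq n$, hence $a=i-2\geq n-2=n-c$; Proposition \ref{primaprop} then squeezes $n-2\leq a\leq\frac{n-1}{2}$, so $n\leq 3$, and a short check of prime Fano varieties of dimension at most $3$ with index $\geq n$, covered by lines, and non-degenerate in $\p^{n+2}$ shows there are none, so this case is vacuous (compare the closing remark in the proof of Proposition \ref{secondaprop}).

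For $c\geq 3$ the displayed inequality already gives $a\geq n-\frac{c+3}{2}\geq n-c$, so Proposition \ref{primaprop} applies: $a\leq\frac{n+c-3}{2}$, i.e. $i\leq\frac{n+c+1}{2}$. Comparing this with $i\geq\frac{n+\delta}{2}$ yields $\delta\leq c+1$, and together with $\delta\geq n-c+1$ this forces $n\leq 2c$, which is alternative (2). For the first refinement I would argue that if in addition $X$ is $CC$, then since $\Pic(X)$ is cyclic and $X$ is covered by lines, $X$ is not a linear projection of a Veronese $v_2(\p^m)$ (those contain no line) nor of a Segre $\p^r\times\p^s$ with $r,s\geq 1$ (those have Picard rank $2$); by the Ionescu--Russo classification of conic-connected manifolds (\cite{IR2}) the only remaining possibility is that $X$ is $LQEL$. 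For the second refinement, if $n=2c$ then the chain of inequalities collapses to equalities, $\frac{2n-c+1}{2}=\frac{3c+1}{2}=\frac{n+c+1}{2}$, so $i=\frac{n+c+1}{2}$ and $a=\frac{n+c-3}{2}$; thus the equality hypothesis of Proposition \ref{secondaprop} holds (with $a\geq n-c$ and $n\leq 2c$), so $X$ is projectively equivalent to $\p^1\times\p^{n-1}\subset\p^{2n-1}$, to $\mathbb{G}(1,4)\subset\p^9$, or to $\mathbb{S}^{10}\subset\p^{15}$. The first has codimension $n-1$, so $n=2c$ would force $n=2$, against $n\geq 3$; hence $X\simeq\mathbb{G}(1,4)$ or $X\simeq\mathbb{S}^{10}$.

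The step I expect to be the main obstacle is the implication that, under the running hypotheses, a conic-connected $X$ is automatically $LQEL$: it does not come out of the elementary inputs used elsewhere and really depends on the Ionescu--Russo structure theory for conic-connected manifolds, so one must check carefully which of their cases survive the hypotheses that $\Pic(X)$ is cyclic and $X$ is covered by lines. A secondary annoyance is the low-codimension bookkeeping ($c\leq 2$): certifying that nothing exotic appears there still requires a glance at the classification of Fano varieties of small dimension, even after the numerics have pinned $n$ down.
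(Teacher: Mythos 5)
Your overall strategy --- convert $i\geq\frac{n+\delta}{2}$ into $a\geq\frac{2n-c-3}{2}$ via $a=i-2$, dispose of $c\leq 2$ by hand, use Proposition \ref{primaprop} to force $n\leq 2c$ when $c\geq 3$, and invoke Proposition \ref{secondaprop} at $n=2c$ --- is the paper's, and those parts are correct; your direct treatment of $c=1$ (degree of the hypersurface must be $2$) is if anything cleaner than the paper's appeal to the classification of varieties of index $\geq n$, and your exclusion of the Segre at $n=2c$ by a codimension count is an acceptable substitute for the paper's Picard-rank argument.

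The genuine gap is the step ``$CC$ implies $LQEL$''. The Ionescu--Russo classification of conic-connected manifolds does not say ``$LQEL$, or a projected Veronese, or a (section of a) Segre'': its residual branch is ``Fano with $\Pic(X)=\Z\langle H\rangle$ and index $i_X\geq\frac{n+1}{2}$'', which is strictly weaker than $LQEL$. A smooth cubic fourfold $X\subset\p^5$ is conic-connected, prime Fano of index $3\geq\frac{5}{2}$, covered by lines, and is not $LQEL$: its secant defect is $\delta=n=4$, so being $LQEL$ would force $X$ to contain a $4$-dimensional quadric, i.e.\ to be a quadric. Hence ruling out the Veronese and Segre cases still leaves you in a branch containing non-$LQEL$ varieties; the tell is that your argument for this step never uses the index hypothesis. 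The argument that works (and is the paper's) is: by \cite[Proposition 3.2]{IR3}, for a conic $C$ through two general points one has $n+1\leq -K_X\cdot C=2i\leq n+\delta$, hence $i\leq\frac{n+\delta}{2}$; combined with the hypothesis this forces $i=\frac{n+\delta}{2}$, and equality in that bound is precisely the characterization of $LQEL$ given in the same proposition. (The cubic fourfold has $i=3<4=\frac{n+\delta}{2}$, so it is consistent with the statement of the proposition; it only defeats your proof of it.)
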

\begin{proof}
We know that for a prime Fano variety covered by lines we have $i=a+2$, so our hypothesis becomes $a\geq \frac{n +\delta -4}{2}$. Recalling that $\delta \geq n-c+1$, we get $a \geq \frac{2n-c-3}{2}$. We would like to use Proposition \ref{primaprop}. We have $a \geq \frac{2n-c-3}{2}$ and this is also greater or equal to $n-c$, unless $c \leq 2$.

Let us suppose $n > 2c$ and consider the case $c\geq 3$. We know by Proposition \ref{primaprop} that $a \leq \frac{n+c-3}{2}$ and considering both inequalities, we get
$$
\frac{n+c-3}{2} \geq a \geq \frac{2n-c-3}{2},
$$
which gives us $n\leq 2c$ and a contradiction. Note also that the equality $n=2c$  forces that $a=\frac{n+c-3}{2}$.

If we suppose $c=2$, we have that $i \geq \frac{2n-1}{2}-\frac{1}{2}$, giving us two possibilities:
\begin{itemize}
\item[-] $i$, so $X\simeq Q^n \subset \mathbb{P}^{n+2}$. This leads to contradiction because we supposed $X$ to be non-degenerate.
\item[-] $i+1$, so $X\simeq \mathbb{P}^n \subset \mathbb{P}^{n+2}$. This also leads to contradiction for the same reason.
\end{itemize}
Let us finally take $c=1$, where we have $i \geq \frac{2n-c+1}{2}=\frac{2n}{2} = n$. Also in this case the two possibilities are $i+1$, that gives us a hyperplane of the projective space $\p^N$, which is of course degenerate and leads to contradiction; or $i$, where $X \simeq Q^n \subset \mathbb{P}^{n+1}$. This is the only possible case and we have completed the first half of the classification.

Assume now that $n\leq 2c$ and $c\geq 3$. If $X$ is $CC$, using \cite[Proposition 3.2]{IR3} we observe that $n+1 \leq -K_X\cdot C \leq n +\delta$, where $C$ is a conic contained in $X$ and passing through two general points. We have $-K_X\cdot C = 2i$, so $\frac{n+1}{2} \leq i \leq \frac{n+\delta}{2}$. In our case we find the equality $i = \frac{n+ \delta}{2}$, whose consequence is that $X$ is an $LQEL$ variety by \cite[Proposition 3.2]{IR3}. Let us consider, finally, the boundary case $n=2c$. As we already remarked above, this forces the equality  $\frac{n+c-3}{2}= a$ and we may apply Proposition \ref{secondaprop}. Note that the first case is excluded since it is not a prime Fano variety, while the other two satisfy our assumptions.
\end{proof}

\section{Some results on Conic Connectedness}\label{PS}
In this section $X\subset\p^{N}$ will be a variety of dimension $n$ set theoretically defined by $G_{1},\ldots,G_{m}$, where $G_{i}\in k[x_{0},\ldots,x_{N}]_{d_{i}}$ is a homogeneous polynomial of degree $d_{i}$. Our aim is to give a relation between the parameters $n, c, m$ and $d_{i},$ ensuring the conic connectedness of $X$. 

\subsubsection*{Spaces of Morphisms}
In this part we mainly follow the notation and the definitions of \cite{De}. We introduce spaces of morphisms because we will refer to these in Remark \ref{Mori}. Let $f:\p^{1}\rightarrow\p^{N}$ be a morphism of degree $l$. We can write 
$$f(u,v) = [f_{0}(u,v):\cdots:f_{N}(u,v)],$$ 
where $f_{i}(u,v)\in k[u,v]_{l}$, and $f_{0},...,f_{N}$ do not have non constant common factors, so morphisms of degree $l$ from $\p^{1}$ to $\p^{N}$ are parameterized by a Zariski open subset of the projective space $\p((k[u,v]_{l})^{N+1})$ denoted by $\Mor_{l}(\p^{1},\p^{N})$. This open subset is the complement of the closed subset parameterizing polynomials $f_{0},...,f_{N}$ having common zeros. Note that these polynomials have a common zero if and only if $\Res(\sum_{i}z_{i}f_{i},\sum_{i}w_{i}f_{i})$ is identically zero as a polynomial in the $z_{i},w_{i}$. 
Morphisms of degree $l$ from $\p^{1}$ to $X$ are parameterized by the subscheme $\Mor_{l}(\p^{1},X)$ of $\Mor_{l}(\p^{1},\p^{N})$ defined by $G_{i}(f_{0},\cdots,f_{N}) = 0$ for $i = 1,...,m$. We denote by $\overline{\Mor_{l}(\p^{1},X)}$ its closure. Note that we are parameterizing morphisms from $\p^{1}$ to $X$ and not their images. Indeed the Chow scheme of rational curves of degree $l$ in $X$ can be obtained from an open subscheme of $\Mor_{l}(\p^{1},X)$ by taking the quotient by the action of $\rm Aut(\p^{1})$.

Let $[f]\in \Mor_{l}(\p^{1},X)$ be a morphism. By deformation theory of rational curves it is well known that the obstruction to deform $f$ lies in $H^{1}(\mathbb{P}^{1},f^{*}T_{X})$.\\ 
If $H^{1}(\mathbb{P}^{1},f^{*}T_{X}) = 0$ then $\Mor_{l}(\p^{1},X)$ is smooth at $[f]$ and its tangent space is given by $T_{[f]}\Mor_{l}(\p^{1},X)\cong H^{0}(\mathbb{P}^{1},f^{*}T_{X})$.

Fix now a subscheme $B\subseteq\mathbb{P}^{1}$, we denote by $\Mor_{l}(\p^{1},X,B)$ the scheme parametrizing morphisms fixing $B$. In this case the obstruction lies in $H^{1}(\mathbb{P}^{1},f^{*}T_{X}\otimes \mathcal{I}_{B})$ and  the irreducible components of $\Mor_{l}(\p^{1},X,B)$ are all of dimension at least
$$h^{0}(\mathbb{P}^{1},f^{*}T_{X}\otimes \mathcal{I}_{B})-h^{1}(\mathbb{P}^{1},f^{*}T_{X}\otimes \mathcal{I}_{B}).$$
If $H^{1}(\mathbb{P}^{1},f^{*}T_{X}\otimes \mathcal{I}_{B}) = 0$ then $\Mor_{l}(\p^{1},X,B)$ is smooth at $[f]$ and its tangent space is given by $T_{[f]}\Mor_{l}(\p^{1},X,B)\cong H^{0}(\mathbb{P}^{1},f^{*}T_{X}\otimes \mathcal{I}_{B})$.
By Riemann-Roch theorem we get $\dim_{[f]}\Mor_{l}(\p^{1},X,B)\geq \chi(\p^{1},f^{*}T_{X})-lg(B)\dim(X) = -K_{X}\cdot f_{*}\p^{1}+(1-lg(B))\dim(X)$.

Finally we briefly recall the notions of free and very free rational curve and their relationships with the concepts of uniruled and rationally connected variety.
\begin{Definition}
A rational curve $f:\p^{1}\rightarrow X$ is called \textit{free} if $f^{*}T_{X}$ is generated by global sections, and \textit{very free} if $f^{*}T_{X}$ is ample. 
\end{Definition}
It turns out that in characteristic zero, $X$ is \textit{uniruled} if and only if there exists on $X$ a free rational curve, and $X$ is \textit{rationally connected} if and only if there exists on $X$ a very free rational curve.
\subsubsection*{Conic Connectedness} 
Let us begin by giving a weak result on conic connectedness, whose proof is very simple and it prepares us for a stronger result.
\begin{Proposition}\label{CCS}
Let $X\subset\p^{N}$ be a variety set theoretically defined by homogeneous polynomials $G_{i}$ of degree $d_{i}$, for $i = 1,\ldots,m$. If 
$$\sum_{i=1}^{m}d_{i}\leq \frac{N}{2},$$
then $X$ is connected by singular conics.
\end{Proposition}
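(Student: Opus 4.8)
The plan is to realize a singular conic through two general points $x,y\in X$ as a union $\ell_{1}\cup\ell_{2}$ of two coplanar lines meeting at a point $p$, with $x\in\ell_{1}$ and $y\in\ell_{2}$, and to produce $p$ by intersecting, inside $\p^{N}$, the cone of lines of $X$ through $x$ with the cone of lines of $X$ through $y$. If $X$ is a linear subspace the statement is trivial (take $p\in X$ off the line $\langle x,y\rangle$), so from now on assume $X$ is not linear; then two general points of $X$ do not lie on a line contained in $X$, since otherwise $SX=X$ and $X$ would be linear.

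The key step is a dimension bound for these cones. For $x\in X$ let $C_{x}\subseteq X$ be the union of all lines of $X$ through $x$. A point $q\in\p^{N}$ lies on such a line exactly when $G_{i}(sx+tq)$ vanishes identically as a polynomial in $s,t$ for every $i$; writing
$$G_{i}(sx+tq)=\sum_{j=0}^{d_{i}}s^{\,d_{i}-j}t^{\,j}\,R_{i,j}(q),$$
with $R_{i,j}$ a form of degree $j$ in $q$, and noting that $R_{i,0}=G_{i}(x)=0$ because $x\in X$, this condition becomes the vanishing of the $\sum_{i=1}^{m}d_{i}$ forms $R_{i,j}$, $1\le j\le d_{i}$, $1\le i\le m$. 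Hence $C_{x}$ is cut out in $\p^{N}$ by $\sum_{i}d_{i}$ hypersurfaces, so by the projective dimension theorem it is non-empty with $\dim C_{x}\ge N-\sum_{i}d_{i}$.

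To conclude, take $x,y\in X$ general and view $C_{x},C_{y}$ as subvarieties of $\p^{N}$. Since $\sum_{i}d_{i}\le N/2$ we get $\dim C_{x}+\dim C_{y}\ge 2\bigl(N-\sum_{i}d_{i}\bigr)\ge N$, so $C_{x}\cap C_{y}\neq\emptyset$; pick $p$ in the intersection. By construction there are lines $\ell_{1},\ell_{2}\subseteq X$ with $x,p\in\ell_{1}$ and $y,p\in\ell_{2}$. If $\ell_{1}=\ell_{2}$ then $x$ and $y$ would lie on a line contained in $X$, contradicting the reduction above; hence $\ell_{1}\neq\ell_{2}$, they span a plane, and $\ell_{1}\cup\ell_{2}$ is a reducible, in particular singular, plane conic through $x$ and $y$ contained in $X$. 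The argument is essentially routine; the only point that needs a little care is guaranteeing that the conic obtained is honestly singular rather than collapsing to a single line, which is exactly what the "$X$ not linear" reduction ensures.
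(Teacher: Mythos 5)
Your proof is correct and follows essentially the same route as the paper: bound the dimension of the cone of lines through each of $x$ and $y$ by $N-\sum_{i}d_{i}$ using the $\sum_{i}d_{i}$ equations obtained by expanding $G_{i}$ along a pencil through the point, then intersect the two cones in $\p^{N}$. Your extra reduction to the non-linear case, which guarantees that the two lines are distinct (so the conic is genuinely a reducible conic and the intersection point is not $x$ or $y$), is a degenerate-case check the paper glosses over, and it is a welcome addition.
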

Before proving the theorem, let's observe the following fact.
\begin{Remark}\label{Mori}
From classical arguments of deformations of chains of rational curves we have that a singular conic through two general points on a smooth variety can be deformed into a smooth conic. However, the existence of a smooth conic $f:\p^{1}\rightarrow X$ through two general points on a projective variety does not imply the existence of a singular conic, this is true if $\dim_{[f]}(\Mor(\p^{1},X;f_{|\{0,\infty\}}))\geq 2$. This is Mori's Bend-and-Break lemma (\cite{De}, Proposition 3.2). Let us underline the fact that here we only ask for singular conics. 
\end{Remark}
\begin{proof}
Let $x\in X$ be a general point. Lines in $\mathbb{P}^{N}$ through $x$ are parametrized by $\mathbb{P}^{N-1}$. Forcing such a line to be contained in the hypersurface $\{G_{i} = 0\}$ gives $d_{i}$ equations for any $i = 1,...,m$. So for the dimension of the variety of lines $\mathcal{L}_{x}$ we have
$$\dim(\mathcal{L}_{x})\geq N-1-\sum_{i=1}^{m}d_{i}.$$  
Then when $\sum_{i=1}^{m}d_{i}\leq N-1$ the variety $X$ is covered by lines. Let $\mathfrak{Loc}_{x}$ be the locus described on $X$ by lines in $X$ through $x$. From now on we consider such locus in the ambient space $\p^{N}$ in order to have nice intersection properties. Moreover
$$\dim(\mathfrak{Loc}_{x})\geq N-\sum_{i=1}^{m}d_{i}.$$
Let $y\in X$ be another general point. Again we have $\dim(\mathcal{L}_{y})\geq N-1-\sum_{i=1}^{m}d_{i}$ and $\dim(\mathfrak{Loc}_{y})\geq N-\sum_{i=1}^{m}d_{i}$. Our numerical hypothesis yields 
$$\dim(\mathfrak{Loc}_{x})+\dim(\mathfrak{Loc}_{y})-N\geq 2(N-\sum_{i=1}^{m}d_{i})-N\geq 0.$$ 
Then $\mathfrak{Loc}_{x}\cap\mathfrak{Loc}_{y}\neq\emptyset$ and $x,y\in X$ can be connected by a singular conic.
\end{proof}
We are now ready to prove a stronger result. 
\begin{Theorem}\label{CC}
Let $X\subset\p^{N}$ be a variety set theoretically defined by homogeneous polynomials $G_{i}$ of degree $d_{i}$, for $i = 1,..,m$. If  
$$\sum_{i=1}^{m}d_{i}\leq \frac{N+m}{2}$$
then $X$ is connected by singular conics.
Assume $X$ to be smooth and the equations $G_{i}$'s to be scheme theoretical equations for $X$ and in decreasing order of degrees. If 
$$\sum_{i=1}^{c}d_{i}\leq \frac{N+c}{2},$$
where $c=N-n$, then $X$ is conic-connected by smooth conics also.
\end{Theorem}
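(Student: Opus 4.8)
The plan is to attack both statements through the deformation theory of degree‑two maps $\p^1\to\p^N$, sharpening the line count of Proposition~\ref{CCS} by replacing pairs of lines with genuine conics, and then using smoothness to cut down the number of relevant equations in the second statement.

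First I would do the parameter count. Fix $x,y\in X$ general and work inside $\Mor_{\le 2}(\p^1,\p^N)\cong\p^{3N+2}$, writing a conic as an $(N+1)$-tuple of binary quadrics. Imposing $f(0)=x$ and $f(\infty)=y$ is $2N$ linear conditions cutting out a linear subspace $\p^{N+2}$. For each $i$, requiring $f(\p^1)\subseteq\{G_i=0\}$ forces all $2d_i+1$ coefficients of $G_i\circ f\in k[u,v]_{2d_i}$ to vanish; but the coefficients of $u^{2d_i}$ and $v^{2d_i}$ are $G_i(x)$ and $G_i(y)$, which vanish since $x,y\in X$, so only $2d_i-1$ conditions remain. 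Hence $M:=\Mor_{\le 2}(\p^1,X;0\mapsto x,\infty\mapsto y)$ is cut out inside $\p^{N+2}$ by at most $\sum_{i=1}^m(2d_i-1)=2\sum_{i=1}^m d_i-m$ equations; under $\sum_{i=1}^m d_i\le\frac{N+m}{2}$ this number is $\le N$, so by the projective dimension theorem $M$ is nonempty and every component has dimension $\ge 2$. To conclude the first statement: a general member of $M$ is non-constant ($x\ne y$) of degree $\le 2$, so its image is either a line — which forces $\overline{xy}\subseteq X$, hence $X$ a linear space (as $x,y$ are general and $X$ irreducible), trivially connected by singular conics — or a smooth conic $C$ with $f\colon\p^1\xrightarrow{\sim}C\subseteq X$. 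In the latter case $\dim_{[f]}\Mor(\p^1,X;f_{|\{0,\infty\}})\ge 2$, so Mori's Bend-and-Break in the form recalled in Remark~\ref{Mori} yields a connected, reducible or non-reduced conic of degree $\le 2$ through $x$ and $y$ inside $X$; discarding once more the case $\overline{xy}\subseteq X$, this is a genuine singular conic, so $X$ is connected by singular conics.

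For the second statement I would use smoothness to replace ``$m$ equations'' by ``$c$ equations''. Since $X$ is smooth of codimension $c$, at a general point the differentials $dG_1,\dots,dG_m$ span the $c$-dimensional conormal space, so some $c$-element subset $S\subseteq\{1,\dots,m\}$ spans it over a dense open $U\subseteq X$; being a $c$-subset, $\sum_{i\in S}d_i\le\sum_{i=1}^c d_i\le\frac{N+c}{2}$, which is exactly why the degrees are taken in decreasing order. Put $Y:=V\big(\{G_i\}_{i\in S}\big)$, a complete intersection of $c$ hypersurfaces whose degrees sum to at most $\frac{N+c}{2}$; since $Y\cap U=X\cap U$ scheme-theoretically, the irreducible $X$ is a component of $Y$. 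Now rerun the argument of the previous paragraph with $Y$ in place of $X$ (only $c$ equations, so the bound reads $\sum_{i\in S}(2d_i-1)\le N$): for $x,y$ general on the component $X$ — hence lying on $U$ and on no other component of $Y$ — one gets $\dim\Mor_{\le 2}(\p^1,Y;0\mapsto x,\infty\mapsto y)\ge 2$ and nonempty, and a general member is a line (giving $X$ linear, trivial) or a smooth conic. Being irreducible and passing through the general point $x$, whose only component of $Y$ is $X$, this conic lies in $X$; equivalently one may start from a degenerate member, whose two component lines pass through general points of $X$, hence through no other component of $Y$, so they lie in $X$ and, $X$ being smooth, the chain smooths to a smooth conic in $X$ by the deformation of chains of rational curves recalled in Remark~\ref{Mori}. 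Either way $X$ is conic-connected by smooth conics.

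The step I expect to be the main obstacle is the reduction in the last paragraph: one must be sure that a $c$-subset of the generators genuinely recovers the scheme structure of $X$ on a dense open, so that conics and lines of the auxiliary complete intersection $Y$ passing through a general point of $X$ are trapped inside $X$, and that a general member of the resulting $\ge 2$-dimensional family is an honest smooth conic rather than a degenerate one; the coefficient bookkeeping of the second paragraph and the appeals to the projective dimension theorem and to Remark~\ref{Mori} are routine. Finally, the numerical hypothesis enters only to make the linear space $M$ larger than the number of conic conditions, and its sharpness is witnessed, as in Remark~\ref{SHA}, by the smooth cubic threefold in $\p^4$, for which $\sum d_i=3>\frac{5}{2}=\frac{N+c}{2}$.
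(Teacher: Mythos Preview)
Your parameter count inside $\p^{N+2}$ is correct and recovers the same number $2\sum d_i-m$ of equations as the paper, but the passage from ``$\dim M\ge 2$'' to ``a general member of $M$ is an honest morphism through $x$ and $y$'' is a genuine gap. A point of the projective closure $M\subset\overline{\Mor_2}(\p^1,\p^N)$ where the $N{+}1$ binary quadrics share a common linear factor does \emph{not} represent a degree-two map hitting both $x$ and $y$: if, in your coordinates, the coefficient $c_0$ governing $f(0)=x$ vanishes, then after cancelling the common factor one is left with a single line through $y$ alone, and symmetrically for $a_N$. Nothing in the dimension estimate prevents a component of $M$ from lying entirely inside $\{c_0=0\}\cup\{a_N=0\}$; indeed the naive lower bound for $\dim(M\cap\{c_0=0\})$ already matches that for $\dim M$. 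So you cannot conclude that a smooth conic exists, and hence cannot feed Bend-and-Break. The phrase ``a degenerate member, whose two component lines\ldots'' in your second paragraph reflects the same confusion: the boundary of $\overline{\Mor_2}$ does not parametrise pairs of lines.

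The paper avoids this entirely by working directly with singular conics, looking for the \emph{vertex} rather than the conic. One writes down $\sum d_i$ equations for $\mathfrak{Loc}_x=\{p:\overline{xp}\subset X\}$ and another $\sum d_i$ for $\mathfrak{Loc}_y$, and then observes that the $m$ equations $G_1=\cdots=G_m=0$ (obtained by putting $u=0$ in either system) appear in \emph{both} lists; hence $\mathfrak{Loc}_x\cap\mathfrak{Loc}_y\subset\p^N$ is cut out by only $2\sum d_i-m\le N$ equations, so it is nonempty, and any $p$ in it yields the singular conic $\overline{xp}\cup\overline{yp}$ directly, with no Bend-and-Break needed. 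For the second assertion the paper reduces to $c$ equations via the Bertram--Ein--Lazarsfeld liaison $Y=X\cup X'$ (your alternative, selecting a $c$-subset of the $G_i$ with independent differentials at a general point, is a reasonable substitute), then applies the first part to $Y$ and smooths the resulting singular conic inside the smooth $X$; your reduction is fine, but since it rests on the first part, the gap propagates.
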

\begin{proof}
Let $x,y\in X$ be two general points; we can assume $x = [1:0...:0]$ and $y = [0:...:0:1]$. We write $G_{i} = \sum_{j_{0}+\cdots+j_{N} = d_{i}}g_{j_{0},...,j_{N}}^{i}x_{0}^{j_{0}}...x_{N}^{j_{N}}$. Since $x,y$ lie in $X$ we get two conditions $G_{i}(1,0,...,0) = g_{d_i,0,...,0}^{i} = 0$ and $G_{i}(0,...,0,1) = g_{0,...,0,d_i}^{i} = 0$ for $i = 1,...,m$.

Let $p = [x_{0},...,x_{N}]$ be a point in $\p^{N}$. We parametrize lines through $x$ by $ux+vp = [u+vx_{0},...,vx_{N}]$, and lines through $y$ by $uy+vp = [vx_{0},...,u+vx_{N}]$. Now $G_{i}(ux+vp)$ is a polynomial of degree $d_{i}$ in $u,v$, it has $d_{i}+1$ coefficients, but the coefficients of $u^{d_i}$ does not appear because $x\in X$. So from $G_{i}(ux+vp)\equiv 0$ we get $d_{i}$ conditions and summing up on $i=1,...,m$ we have $\sum_{i=1}^{m}d_{i}$ equations, and we denote by $\mathfrak{Loc}_{x}$ the corresponding locus.
Similarly from $G_{i}(uy+vp)\equiv 0$ with $i=1,...,m$ we get $\sum_{i=1}^{m}d_{i}$ equations. Let $\mathfrak{Loc}_{y}$ be the locus of lines in $X$ through $y$. Note that the systems of equations defining $\mathfrak{Loc}_{x}$ and $\mathfrak{Loc}_{y}$ have $m$ common equations which are exactly $G_{1},...,G_{m}$, that can be found putting $u=0$ and $v=1$. So the intersection $\mathfrak{Loc}_{x}\cap \mathfrak{Loc}_{y}$ is defined by at most $2\sum_{i=1}^{m}d_{i}-m$ equations. Our numerical hypothesis ensures that this intersection is not empty.\\
Now assume $X\subseteq \p^{N}$ to be smooth and scheme theoretically defined by equations of degree $d_{1}\geq \cdots\geq d_{m}$. We use the same trick as in Theorem $2.4$ from \cite{IR}. By a result in \cite{BEL}, making a sort of liaison we can find $g_{i}\in H^{0}(\p^{N},\mathcal{I}_{X}(d_{i}))$ for $i = 1,...,c$ such that 
$$Y := Z(g_{1},...,g_{c}) = X\cup X^{'},$$
and $X^{'}$ intersects $X$ in a divisor when nonempty. If $x\in X$ is a general point then a line through $x$ is contained in $X$ if and only if it is contained in $Y$. This means that $\mathcal{L}_{x}(X)$ and $\mathcal{L}_{x}(Y)$ coincide set theoretically, and the same is true for the cones of lines through $x$. By the first part of the proof we have if $\sum_{i=1}^{c}d_{i}\leq \frac{N+c}{2}$ then there is a singular conic through two general points of $X$.
But we are now assuming $X$ to be smooth, and by general smoothing arguments (\cite{De}, Proposition 4.24) a singular conic through two general points $x,y$ can be deformed into a smooth conic containing $x,y$, so $X$ is conic-connected.
\end{proof}

We report an example to clarify the steps of our proof.

\begin{Example}
Consider the smooth quadric surface $X\subseteq \mathbb{P}^{3}$ defined by $G :=x_{0}x_{3}-x_{1}x_{2} = 0$, and the points $x = [1:0:0:0]$, $y = [0:0:0:1]$. From $G(ux+vp)\equiv 0$ and $G(uy+vp)\equiv 0$ we get 
$$
\left\{
\begin{array}{l}
x_{0}x_{3}-x_{1}x_{2} = 0;\\
x_{3} = 0;
\end{array}
\right.
\qquad
\left\{
\begin{array}{l}
x_{0}x_{3}-x_{1}x_{2} = 0;\\
x_{0} = 0;
\end{array}
\right.
$$
respectively. Computing their intersection we get two singular conics connecting $x$ and $y$, the conic $\{x_{2} = x_{3} = 0\}\cup \{x_{0} = x_{2} = 0\}$, and the conic $\{x_{1} = x_{3} = 0\}\cup \{x_{0} = x_{1} = 0\}$. 
\end{Example}

\begin{Remark}
In the range of Theorem \ref{CC} $X$ is covered by lines. The usual numerical condition to ensure that a variety is covered by lines is $\sum_{i=1}^{m}d_{i}< N$. Since $X$ is non degenerate $d_{i}\geq 2$ for any $i=1,...,m$. So under the numerical hypothesis of Theorem \ref{CC} we have $2m\leq \sum_{i=1}^{m}d_{i}\leq \frac{N+m}{2}$ which is equivalent to $3m\leq N$. In particular we get $m<N$ which implies 
$$\sum_{i=1}^{m}d_{i}\leq \frac{N+m}{2}< N.$$
The inequality $\sum_{i=1}^{m}d_{i}< N$ forces $X$ to be covered by lines.
\end{Remark}

In \cite{BH} \textit{Bonavero} and \textit{H\"oring} prove a similar fact using a different argument and taking $X$ to be a general scheme theoretical complete intersection. In the case $m = c$,  we get from Theorem \ref{CC} the following corollary, slightly weaker than theirs.

\begin{Corollary}\label{bonhor}
Let $X\subset\p^{N}$ be a smooth complete intersection defined by homogeneous polynomials $G_{i}$ of degree $d_{i}$, for $i = 1,..,c$. If 
$$\sum_{i=1}^{c}d_{i}\leq \frac{n}{2}+c,$$
then $X$ is conic-connected.
\end{Corollary}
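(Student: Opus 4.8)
The plan is to read this off directly from Theorem \ref{CC}. A smooth complete intersection $X$ of codimension $c$ in $\p^{N}$ is cut out scheme-theoretically — hence a fortiori set-theoretically — by exactly $m=c$ hypersurfaces $G_{1},\dots,G_{c}$, which we may list in decreasing order of degrees $d_{1}\geq\cdots\geq d_{c}$. Under the standing normalization $N=n+c$ we have the purely arithmetic identity
$$\frac{N+m}{2}=\frac{N+c}{2}=\frac{(n+c)+c}{2}=\frac{n}{2}+c,$$
so the hypothesis $\sum_{i=1}^{c}d_{i}\leq\frac{n}{2}+c$ coincides simultaneously with the condition $\sum_{i=1}^{m}d_{i}\leq\frac{N+m}{2}$ of the first part of Theorem \ref{CC} and with the condition $\sum_{i=1}^{c}d_{i}\leq\frac{N+c}{2}$ of its second part.

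Consequently I would argue as follows. The first part of Theorem \ref{CC}, applied with $m=c$, already gives that $X$ is connected by singular conics; since a singular conic is in particular a conic contained in $X$ through the two chosen general points, this is exactly the statement that $X$ is conic-connected. If one prefers the sharper conclusion that $X$ is conic-connected by \emph{smooth} conics, one invokes instead the second part of Theorem \ref{CC}: in the complete-intersection case the liaison step via \cite{BEL} is vacuous, because one already has $m=c$ equations and may simply take $g_{i}=G_{i}$; then smoothness of $X$ together with the smoothing of a singular conic into a smooth one (as in \cite{De}, Proposition 4.24) yields the claim.

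I do not anticipate any genuine obstacle here: the corollary is nothing more than Theorem \ref{CC} transcribed into the complete-intersection situation $m=c$, $N=n+c$, and the only verification needed is the displayed identity $\frac{N+c}{2}=\frac{n}{2}+c$, which is immediate. The only mild point worth a word is that one should make explicit that the hypotheses of Theorem \ref{CC} (smoothness, scheme-theoretic defining equations, decreasing order of degrees) are automatically satisfied by a smooth complete intersection after reordering the $G_{i}$, so that the application is legitimate.
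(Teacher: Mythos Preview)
Your proposal is correct and matches the paper's approach: the paper states the corollary immediately after Theorem \ref{CC} with the remark ``In the case $m=c$, we get from Theorem \ref{CC} the following corollary,'' and gives no further argument. Your write-up simply makes explicit the arithmetic $\frac{N+c}{2}=\frac{n}{2}+c$ and checks that the hypotheses of the theorem are satisfied for a smooth complete intersection, which is exactly what is implicit in the paper's one-line deduction.
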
 

Furthermore, when the equality $\sum_{i=1}^{c}d_{i} = \frac{n+1}{2}+c$ holds, Bonavero and H\"oring prove that the number of conics in $X$ through two general points is finite, and they compute this number.

\begin{Remark}\label{SHA}
The inequality $\sum_{i=1}^{m}d_{i}\leq \frac{N+m}{2}$ is sharp. Let $X\subset \mathbb{P}^{4}$ be a smooth degree $d = 3$ hypersurface. Then $X$ is Fano of index $i_{X} = 2$, and $d = 3\leq N-1$ implies that $X$ is covered by lines. Since $d = 3\leq \frac{N+m+1}{2} = 3$ by the main result of \cite{BH} we have that $X$ is conic-connected. In general if $X$ is a smooth, covered by lines, conic-connected by smooth conics, Fano, projective variety of dimension $\dim(X) = n$, then $X$ is not connected by singular conics if and only if $i_{X} = \frac{n+1}{2}$. In our example $i_{X} = 2 = \frac{n+1}{2}$ and a proof of
this fact can be found in \cite{Wa}. The general cubic hypersurface in $\mathbb{P}^{4}$ is an example of a conic-connected variety which is not connected by singular conics and it is at the limit of our inequality.  
\end{Remark}

\begin{Remark}\label{smooth}
We want to highlight the role of the smoothness and of the singular conics in our argument.
\begin{itemize}
\item[-] Consider the cone over an elliptic cubic curve $X = Z(x_{0}x_{N}^{2} - x_{1}^{3} - x_{1}x_{0}^{2})\subset \mathbb{P}^{N}$, clearly $X$ is not ``smooth conic''-connected for any $N$. However two general points can be connected by a singular conic.
\item[-] Consider the rational normal scroll $X\subset\p^{4}$. It is conic-connected, but if one the two points is on the $(-1)$-curve on $X$ we actually get a singular conic but not a smooth one.   
\end{itemize}
\end{Remark}

\begin{Remark}
Suppose $X$ to be smooth. If $\sum_{i=1}^{c}d_{i}\leq \frac{N+c}{2}$, from $2c\leq \sum_{i=1}^{c}d_{i}\leq \frac{N+c}{2}$, we get $2c\leq n$. We are in the range of the Hartshorne Conjecture unless $X$ is quadratic. So if the  Hartshorne Conjecture is true \ref{CC} follows from the main theorem of \cite{BH}, and the case when $X$ is quadratic is covered by the main theorem of \cite{IR}.
\end{Remark}

\subsubsection*{Counting singular conics}
If $X$ is smooth and in Theorem \ref{CC} the equality holds we expect the number of singular conics through two general points to be finite.

A theorem by \textit{Faltings} in \cite{Fa} says that if $X$ is {\it smooth} and the number of equations {\it scheme theoretically} defining $X$ is small, precisely $m\leq \frac{N}{2}$, then $X$ is a complete intersection. Since $X$ is non-degenerate, $d_{i}\geq 2$ for any $i = 1,...,m$, so $\sum_{i=1}^{m}d_{i}\geq 2m$. From $2m\leq \sum_{i=1}^{m}d_{i}\leq \frac{N+m}{2}$ we get $m\leq \frac{N}{3}$. As $\frac{N}{3}< \frac{N}{2}$, a smooth variety scheme theoretically defined by the given equations and in the range of Theorem \ref{CC} is a complete intersection.

\begin{Proposition}
Let $X = Z(G_{1})\cap\cdots\cap Z(G_{c})\subset\p^{N}$ be a smooth complete intersection. If the equality $\sum_{i=1}^{c}d_{i} = \frac{N+c}{2}$ holds then the number of singular conics through two general points of $X$ is finite and is given by 
$$C_{2,2} = \prod_{i=1}^{c}d_{i}!(d_{i}-1)!$$
\end{Proposition}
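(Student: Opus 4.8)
The plan is to count the singular conics through two general points $x,y\in X$ by setting up the incidence scheme used in the proof of Theorem~\ref{CC} and computing the degree of a zero-dimensional fiber. Pick coordinates so that $x=[1:0:\cdots:0]$ and $y=[0:\cdots:0:1]$. A singular conic through $x$ and $y$ is a union $\ell_1\cup\ell_2$ of two lines, with $x\in\ell_1$, $y\in\ell_2$, and $\ell_1\cap\ell_2=\{p\}$ for some point $p\in X$. Equivalently, the data is a point $p\in\mathfrak{Loc}_x\cap\mathfrak{Loc}_y$, where $\mathfrak{Loc}_x$ (resp.\ $\mathfrak{Loc}_y$) is the cone of lines of $X$ through $x$ (resp.\ $y$), together with the (generically unique) choice of line through $x$ and $p$ and through $y$ and $p$. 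So the number $C_{2,2}$ equals the length of the scheme $\mathfrak{Loc}_x\cap\mathfrak{Loc}_y$, which under the equality hypothesis is expected to be a finite reduced scheme.

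First I would recall from the proof of Theorem~\ref{CC} the explicit equations. Parametrizing lines through $x$ by $p=[x_0:\cdots:x_N]\mapsto ux+vp$, the condition $G_i(ux+vp)\equiv 0$ in $(u,v)$ gives $d_i$ equations in the $x_j$ (the $u^{d_i}$-coefficient vanishes automatically since $x\in X$); write $G_i(ux+vp)=\sum_{k=1}^{d_i}A^i_k(p)\,u^{d_i-k}v^k$, where $A^i_k$ is homogeneous of degree $k$ in the coordinates of $p$. Similarly $G_i(uy+vp)=\sum_{k=1}^{d_i}B^i_k(p)\,u^{d_i-k}v^k$ with $B^i_k$ homogeneous of degree $k$, and the last coefficients $A^i_{d_i}=B^i_{d_i}=G_i$ coincide. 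Thus $\mathfrak{Loc}_x\cap\mathfrak{Loc}_y\subset\p^N$ is cut out by the forms $A^i_1,\dots,A^i_{d_i-1},\,B^i_1,\dots,B^i_{d_i-1},\,G_i$ for $i=1,\dots,c$: in total $\sum_{i=1}^c\bigl(2(d_i-1)+1\bigr)=2\sum d_i-c=N$ equations in $\p^N$ (using the equality $\sum d_i=\tfrac{N+c}{2}$), of degrees $1,2,\dots,d_i-1$ (twice each, once in the $A$'s and once in the $B$'s) together with $d_i$ for each $i$. The plan is then to argue that for $X$ general (or general $x,y$) this is a regular sequence, so the intersection is finite, and to compute its degree by Bézout as $\prod_{i=1}^c\Bigl(d_i\cdot\prod_{k=1}^{d_i-1}k\cdot\prod_{k=1}^{d_i-1}k\Bigr)=\prod_{i=1}^c d_i\,\bigl((d_i-1)!\bigr)^2$. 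One must then reconcile this with the claimed $\prod_{i=1}^c d_i!\,(d_i-1)!$: since $d_i\cdot(d_i-1)!=d_i!$, the Bézout product is exactly $\prod_{i=1}^c d_i!\,(d_i-1)!$, as asserted.

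The remaining points to address are: (i) the intersection really has the expected dimension zero — here I would invoke genericity of $X$ (or of the two points) and a Bertini-type argument, noting that by the dimension count in Theorem~\ref{CC} the expected dimension is exactly $N-N=0$; (ii) the scheme is reduced, so that length equals the naive count — one checks that a general $p$ in the intersection gives a line through $x$ and through $y$ meeting transversally and that the deformation-theoretic obstruction vanishes, using that a general singular conic on a smooth $X$ smooths (Remark~\ref{Mori}) and the Mori-theoretic dimension count $\dim_{[f]}\Mor(\p^1,X;f|_{\{0,\infty\}})=-K_X\cdot C-\dim X = 2(n+1-\sum d_i+c)-n$, which vanishes precisely under the equality hypothesis; and (iii) no component of $\mathfrak{Loc}_x\cap\mathfrak{Loc}_y$ lies in the locus where the two lines degenerate (e.g.\ where $p$ lies on $\langle x,y\rangle$, or where $\ell_1=\ell_2$), which again follows from generality of $x,y$. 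The main obstacle is step (ii): controlling multiplicities and showing the count is enumeratively correct rather than merely an upper bound, which is exactly the place where the equality case and the smoothing argument of Bonavero–H\"oring in \cite{BH} are needed, and where I would either cite their computation directly or redo it via the above Bézout count together with the vanishing of the relevant $H^1$.
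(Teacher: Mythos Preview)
Your proposal is correct and follows essentially the same approach as the paper: write down the explicit equations for $\mathfrak{Loc}_x\cap\mathfrak{Loc}_y$ coming from the coefficients of $G_i(ux+vp)$ and $G_i(uy+vp)$, note that the degree-$d_i$ coefficients coincide, count $2\sum d_i-c=N$ equations of degrees $1,\dots,d_i-1$ (twice) and $d_i$, and apply B\'ezout to obtain $\prod_i d_i!(d_i-1)!$. The paper's proof is in fact terser than yours: it simply asserts that the equations are ``independent'' and invokes B\'ezout, without addressing the reducedness/expected-dimension issues (your points (i)--(iii)) that you flag; so your extra care there goes beyond what the paper supplies.
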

\begin{proof}
We use the same notation of the Theorem \ref{CC}. Forcing $G_{i}(ux+vp) = 0$ we get $d_{i}$ equations of degrees $1,2,\ldots,d_{i}$ respectively, indeed we are forcing the vanishing of all terms of the dehomogenization of $G_{i}$ in an affine chart containing $x$ up to degree $d_{i}$. Similarly from $G_{i}(up+vy)$ we have $d_{i}-1$ equations of degrees $1,2,\ldots,d_{i}-1$; here we do not take the equation $G_{i}(p) = 0$ of degree $d_{i}$ because we have already considered it in the previous step. Summing up on $i=1,\ldots,c$ we have $2\sum_{i=1}^{c}d_{i}-c$ independent equations in $\p^{N}$. Since the equality $2\sum_{i=1}^{c}d_{i}-c = N$ holds their intersection consists of a finite number of points. By Bezout's theorem this number is given by the product of the degrees $d_{1}!(d_{1}-1)!\cdots  d_{c}!(d_{c}-1)! = \prod_{i=1}^{c}d_{i}!(d_{i}-1)!$
\end{proof}

\begin{Example}
Two points on a smooth quadric $X\subset\p^{3}$ can be connected by $2$ singular conics. A more interesting example is given by a complete intersection of two quadrics $X = X_{1}\cap X_{2}\subset\p^{6}$. In this case, our formula predicts the existence of $4$ singular conics. Indeed in this case $\mathfrak{Loc}_{x}$ is a surface of degree $4$ in the tangent space $T_{x}X$, so $\mathfrak{Loc}_{x}\cap T_{y}X$ consists of $4$ points.
\end{Example}

\subsubsection*{Acknowledgements}
The authors would like to express thanks for the wonderful environment created at \textit{P.R.A.G.MAT.I.C. (Catania, Italy, September 2010)} to the local organizers: \textit{Alfio Ragusa}, \textit{Giuseppe Zappal$\grave{a}$}, \textit{Rosario Strano}, \textit{Renato Maggioni} and \textit{Salvatore Giuffrida}, and they also acknowledge the support from the University of Catania. The authors heartily thank \textit{Prof. Paltin Ionescu} and \textit{Dr. Jos$\acute{e}$ Carlos Sierra} for the introduction to the subject, many helpful comments and suggestions.

\end{document}